\newtheorem{theorem}{Theorem}[section]
\newtheorem{corollary}[theorem]{Corollary}
\newtheorem{lemma}[theorem]{Lemma}
\newtheorem{remark}[theorem]{Remark}
\newtheorem{example}[theorem]{Example}
\begin{document}
\title[]{Meromorphic solutions of delay differential equations related to logistic type and generalizations}

%    Information for first author
\author[Ling Xu]{Ling Xu}
%    Address of record for the research reported here
\address[Ling Xu]{$^{1}$ School of Life Sciences, Jiangxi Science and Technology Normal University, Jiangxi 330013, P.R. China; $^{2}$Department of Mathematics, Nanchang University, Jiangxi 330031, P. R. China; }
%    Current address
%\curraddr{School of Life Sciences, Jiangxi Science and Technology Normal University, Jiangxi 330031, P.R. China}
\email{xuling-jxstnu@foxmail.com}
%\thanks{The second author was supported in part by }

%    Information for second author
\author[Tingbin Cao]{Tingbin Cao}
%    Address of record for the research reported here
\address[Tingbin Cao(the corresponding author)]{Department of Mathematics, Nanchang University,  Jiangxi 330031, P. R. China}
%    Current address
%\curraddr{}
\email{tbcao@ncu.edu.cn}
\thanks{The work is supported by the National Natural Science Foundation of China (\#11871260, \#11461042).}

%    \thanks will become a 1st page footnote.

%\thanks{The second author was supported in part by }
%    General info
\subjclass[2010]{Primary 92B05, 39B32, 39A45; Secondary 30D05}

%\date{}

%\dedicatory{}

\keywords{Nevanlinna theory;  Meromorphic functions; entire functions; Logistic equations; Delay differential equations}

\begin{abstract}Let $\{b_{j}\}_{j=1}^{k}$ be meromorphic functions, and let $w$ be admissible meromorphic solutions of delay differential equation $$w'(z)=w(z)\left[\frac{P(z, w(z))}{Q(z,w(z))}+\sum_{j=1}^{k}b_{j}(z)w(z-c_{j})\right]$$ with distinct delays $c_{1}, \ldots, c_{k}\in\mathbb{C}\setminus\{0\},$  where the two nonzero polynomials $P(z, w(z))$ and $Q(z, w(z))$ in $w$ with meromorphic coefficients are prime each other. We obtain that if $\limsup_{r\rightarrow\infty}\frac{\log T(r, w)}{r}=0,$ then $$\deg_{w}(P/Q)\leq k+2.$$ Furthermore, if $Q(z, w(z))$ has at least one nonzero root, then $\deg_{w}(P)=\deg_{w}(Q)+1\leq k+2;$ if all roots of $Q(z, w(z))$ are nonzero, then $\deg_{w}(P)=\deg_{w}(Q)+1\leq k+1;$  if $\deg_{w}(Q)=0,$  then $\deg_{w}(P)\leq 1.$\par

In particular, whenever $\deg_{w}(Q)=0$ and $\deg_{w}(P)\leq 1$ and without the growth condition, any admissible meromorphic solution of the above delay differential equation (called Lenhart-Travis' type logistic delay differential equation) with reduced form can not be an entire function $w$ satisfying $\overline{N}(r, \frac{1}{w})=O(N(r, \frac{1}{w}));$ while if all coefficients are rational functions, then the condition  $\overline{N}(r, \frac{1}{w})=O(N(r, \frac{1}{w}))$ can be omitted. Furthermore, any admissible meromorphic solution of the logistic delay differential equation (that is, for the simplest special case where $k=1$ and $\deg_{w}(P/Q)=0$ ) satisfies that $N(r,w)$ and $T(r, w)$ have the same growth category. Some examples support our results.
\end{abstract}

\maketitle

\section{Introduction and main results}
The logistic delay differential equations is one of the most important delay differential equations which are primarily taken from the biological sciences literature such as population biology, physiology, epidemiology, and neural networks. In 1838, Verhulst \cite{verhulst} investigated the growth of a single population and proposed the famous logistic equation $w'(t)=w(t)\left[a-bw(t)\right].$ It assumes that population density negatively affects the per capita growth rate in terms of $\frac{w'(t)}{w(t)}=a-bw(t)$ due to environmental degradation. In 1948, Hutchinson \cite{hutchinson} pointed out that negative effects that high population densities have on the environment influence birth rates at later times due to developmental and maturation delays. This led him to propose the delayed logistic differential equation $w'(t)=w(t)\left[a-bw(t-r)\right]$ where $a, b, r>0$ and $r$ is called the delay. In 1986, Lenhart and Travis \cite{lenhart-travis} studied the widely logistic model of population dynamics $$w'(t)=w(t)\left[a_{0}-a_{1}x(t)-\sum_{j=1}^{k}b_{j}w(t-r_{j})\right]$$ where $a_{0}, a_{1}, b_{j}>0$ and $r_{j}$ are the distinct delays. It is known that both theory and applications of delay differential equations require a bit more mathematical maturity than its ordinary differential equations counterparts, in which primarily,  the theory of complex analysis plays a large role. For the background, we refer to \cite{smith}. \par

As a generalization of the logistic equation $w'(z)=w(z)\left[b(z)+a(z)w(z)\right]$ with coefficients $a, b$ meromorphic on the complex plane, the Riccati differential equation $$w'=a_{0}(z)+a_{1}(z)w+a_{2}w^{2}$$ was investigated by H. Wittich \cite{wittich} in 1960 by using the method of Nevanlinna theory in complex analysis. Later on, many mathematician deeply studied more general differential equation of the Yosida-Malmquist type $$w^{'}(z)=R(z, f(z)),$$ where $R(z, f(z))$ is a rational function of $f$ in $z$ (see \cite{yosida}, \cite{bank-kaufman} or \cite{laine}). Thus there arises an interesting topic on the study of solutions of delay differential equations of logistic type and generalization in the viewpoint of Nevanlinna theory.\par

In 2017, Halburd and Korhonen \cite{halburd-korhonen} obtained the following theorem which is motivated by G. R. W. Quispel, H. W. Capel, and R. Sahadevan \cite{quispel-capel-sahadevan}. Recently, R. R. Zhang and Z. B. Huang \cite{zhang-huang}, K. Liu and C. J. Song \cite{liu-song} also studied this topic.\par

\begin{theorem}\cite[Theorem 1.1]{halburd-korhonen} \label{TA} Let $w(z)$ be a non-rational meromorphic solution of
\begin{eqnarray}\label{EA}w(z+1)-w(z-1)+a(z)\frac{w^{'}(z)}{w(z)}=R(z, w(z))=\frac{P(z, w(z))}{Q(z, w(z))},
\end{eqnarray} where $a(z)$ is rational, $P(z, w(z))$ is a polynomial in $w$ having rational coefficients in $z,$ and $Q(z, w(z))$ is a polynomial in $w(z)$ with roots that are nonzero rational functions of $z$ and not roots of $P(z, w).$ If the hyperorder of $w(z)$ is less than one, then $$\deg_{w}(P)=\deg_{w}(Q)+1\leq 3 \,\,\mbox{or} \,\, \deg_{w}(R)\leq 1.$$
\end{theorem}

The main purpose of this paper is to study the admissible meromorphic solutions of a generalized delay differential equation combining the logistic type delay differential equations due to Lenhart and Travis \cite{lenhart-travis} and  the delay differential equation due to Halburd and Korhonen \cite{halburd-korhonen}. Before stating our main theorems, let us introduce briefly some basic definitions and notations of Nevanlinna theory for a meromorphic function $f$ in the complex plane $\mathbb{C}.$  For every real number $x\geq 0,$ we define $\log^{+}x:=\max\{0, \log x\}.$ Assume that $n(r, f)$ counts the number of the poles of $f$ in $|z|\leq r$ (counting multiplicity), if ignoring multiplicity, then denote it by $\overline{n}(r, f).$ The Nevanlinna's characteristic function of $f$ is defined by $$T(r,f):=m(r,f)+N(r,f),$$ where
$$N(r,f):=\int_{0}^{r}\frac{n(t,f)-n(0,f)}{t}dt+n(0,f)\log r$$ is called the counting function of poles of $f$ and
$$m(r,f):=\frac{1}{2\pi}\int_{0}^{2\pi}\log^{+}\left|f(re^{i\theta})\right|d\theta$$ is called the proximity function of $f.$ The hyperorder of $f$ is defined by $$\rho_{2}(f):=\limsup_{r\rightarrow\infty}\frac{\log^{+}\log^{+} T(r, f)}{\log r}.$$
The first main theorem in Nevanlinna theory states that $$T(r, \frac{1}{f-a})=T(r,f)+o(T(r, f))$$ holds for any value $a\in\mathbb{C}.$ For more notations and definitions of the Nevanlinna theory, refer to \cite{hayman-1}. Recall that a meromorphic function $g$ is said to be a \emph{small function} with respect to another given meromorphic function $f,$ provided that $T(r, g)=o(T(r, f)),$ possibly outside of a set with finite logarithmic measure. We denote it by $g\in S(f)$ sometimes. For instances, constants are small with respect to nonconstant entire or meromorphic functions, and polynomial (or rational) functions are small with respect to transcendental entire (or meromorphic) functions; the entire function $e^{z}$ is a small function with respect to the meromorphic function $\frac{e^{e^{z}}}{z-1}.$ Obviously, all the transcendental meromorphic solutions of \eqref{EA} with rational coefficients are admissible solutions. More general, if all of the coefficient functions are small functions with respect to a nontrivial meromorphic function solution $w$ of a general functional differential equation, then the solution $w$ is called an \emph{admissible solution}. We will introduce some other basic results of Nevanlinna theory if necessary. \par

Namely, we will study the meromorphic solutions to the delay differential equation of a more general form combining the Lenhart-Travis' logistic type and Halburd-Korhonen's type as follows:
\begin{eqnarray}\label{E2} w'(z)&=&w(z)\left[R(z, w(z))+\sum_{j=1}^{k}b_{j}(z)w(z-c_{j})\right]\\\nonumber
&=&w(z)\left[\frac{P(z, w(z))}{Q(z, w(z))}+\sum_{j=1}^{k}b_{j}(z)w(z-c_{j})\right]
\end{eqnarray} with distinct delays $c_{1}, \ldots, c_{k}\in\mathbb{C}^{*}=\mathbb{C}\setminus\{0\}.$ We obtain the following result, which is an improvement and extension of Theorem \ref{TA} for $k=2$ and \cite[Theorem 1.2]{song-liu-ma} for $k=1$ with rational coefficients. \par

\begin{theorem}\label{T3} Let $\{b_{j}\}_{j=1}^{k}$ be meromorphic functions, and let $w$ be an admissible meromorphic solution of the delay differential equation \eqref{E2}, where the two nonzero polynomials $P(z, w(z))$ and $Q(z, w(z))$ in $w$ with meromorphic coefficients are prime each other (that is, having no common factors). If $\limsup_{r\rightarrow\infty}\frac{\log T(r, w)}{r}=0,$ then $$\deg_{w}(R)\leq k+2,$$ where $\deg_{w}(R)=\max\{\deg_{w}(P), \deg_{w}(Q)\}.$ Furthermore,\par

(i). if $Q(z, w(z))$ has at least one nonzero root, then $$\deg_{w}(P)=\deg_{w}(Q)+1\leq k+2;$$\par

(ii). under the assumption of (i), assume further that all roots of $Q(z, w(z))$ are nonzero, then $$\deg_{w}(P)=\deg_{w}(Q)+1\leq k+1;$$\par

(iii). if $\deg_{w}(Q)=0$ (that is, $R(z, w(z))$ is degenerated to a polynomial in $w$), then $$\deg_{w}(P)=\deg_{w}(R)\leq 1.$$
\end{theorem}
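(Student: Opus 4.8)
The plan is to turn the functional equation \eqref{E2} into an identity for the rational quantity $R(z,w(z))$,
$$R(z,w(z))=\frac{w'(z)}{w(z)}-\sum_{j=1}^{k}b_j(z)\,w(z-c_j),$$
and to play three estimates against each other: the Valiron--Mohon'ko identity $T\bigl(r,R(z,w)\bigr)=\deg_w(R)\,T(r,w)+S(r,w)$ on the left; the lemma on the logarithmic derivative $m(r,w'/w)=S(r,w)$; and---crucially using the growth hypothesis $\limsup_{r\to\infty}\log T(r,w)/r=0$---the difference analogue $m\bigl(r,w(z-c_j)/w(z)\bigr)=S(r,w)$ together with the shift invariance $T\bigl(r,w(z-c_j)\bigr)=T(r,w)+S(r,w)$. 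Here $S(r,w)$ denotes a quantity that is $o(T(r,w))$ off an exceptional set of finite logarithmic measure.

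For the master inequality $\deg_w(R)\le k+2$ I would bound the characteristic of the right-hand side. Since $w'/w$ has only simple poles, located at the zeros and poles of $w$, one has $T(r,w'/w)=m(r,w'/w)+N(r,w'/w)\le S(r,w)+\overline N(r,w)+\overline N(r,1/w)\le 2T(r,w)+S(r,w)$, while each delay term obeys $T\bigl(r,b_j w(z-c_j)\bigr)=T(r,w)+S(r,w)$ by smallness of $b_j$ and shift invariance. Adding the $k$ delay terms to $w'/w$ and comparing with Valiron--Mohon'ko gives $\deg_w(R)\,T(r,w)\le(k+2)T(r,w)+S(r,w)$, whence $\deg_w(R)\le k+2$.

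The refinements I would extract by a local analysis of singularities. At a pole $z_0$ of $w$ of order $p$ that avoids the (small) exceptional set of poles of the coefficients and of the shifts $w(z+c_j)$, the left side $w'/w$ carries only a simple pole, whereas $R(z,w)\sim w^{\deg_w P-\deg_w Q}$ contributes a pole of order $p(\deg_w P-\deg_w Q)$ and the delay terms stay bounded; matching the orders forces $\deg_w P-\deg_w Q\le 1$, and forces $\deg_w P=\deg_w Q+1$ with every pole of $w$ simple as soon as $w$ possesses one pole. For (i), if $Q$ has a nonzero root $\gamma$, then $R$ blows up at every point where $w=\gamma$ (and such points exist for the transcendental solution $w$, e.g.\ by a second main theorem applied to the small target $\gamma$); since $w'/w$ is finite there, the equation can balance only if each such point is a shifted pole $c_j+(\text{pole of }w)$, so $w$ genuinely carries poles and the degree relation yields $\deg_w P=\deg_w Q+1\le k+2$.

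For (ii), the extra hypothesis that all roots of $Q$ are nonzero makes $R$ regular at $w=0$, so the zeros of $w$ are no longer poles of $R$; recomputing $N(r,R)$ from the $P/Q$ side---poles occurring only at the (simple) poles of $w$ and at the points $w=\gamma_i$, the latter again forced to be shifted poles---removes the $\overline N(r,1/w)$ contribution present in the master estimate and sharpens the conclusion to $\deg_w P=\deg_w Q+1\le k+1$. For (iii), $\deg_w Q=0$ turns $R$ into a polynomial whose only singularity in $w$ sits at $w=\infty$: when $w$ has a pole, pole-order matching gives $\deg_w P\le1$ at once, while if $w$ is entire it has, by the forced-shift argument, only finitely many zeros, so $w'/w\in S(w)$, and applying a Clunie-type lemma for delay-differential equations to $w'=a_n w^{n+1}+\cdots+a_0 w+\sum_j b_j\,w(z)\,w(z-c_j)$ yields $m\bigl(r,a_n w^{n-1}+\cdots+a_1\bigr)=S(r,w)$ and hence $\deg_w P=n\le1$. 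The crux is the forced-shift correspondence---a pole of $R$ produced by a nonzero root of $Q$ or by $w=\infty$ can be balanced against the finite left-hand side only through a pole of some delay term---while the technical heart is propagating this pointwise correspondence into counting-function inequalities valid up to $S(r,w)$, absorbing into the small term the coincidences of poles of $w$ with shifted poles, with zeros or poles of the coefficients, and with multiple roots of $Q$; it is precisely here that the growth hypothesis and the smallness of the coefficients are indispensable.
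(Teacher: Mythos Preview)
Your derivation of the master inequality $\deg_w(R)\le k+2$ and the sharpening in part~(ii) follow the same lines as the paper and are fine. The real difficulty is part~(i), and here your argument has a genuine gap.

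You attempt to pin down $\deg_wP-\deg_wQ$ by looking at a pole $z_0$ of $w$ ``that avoids the (small) exceptional set of poles of \dots\ the shifts $w(z+c_j)$.'' But the poles of $w(z-c_j)$ are \emph{not} a small set: $N\bigl(r,w(z-c_j)\bigr)=N(r,w)+S(r,w)$, so there is no reason such a $z_0$ exists. If, say, $k=1$ and the poles of $w$ form an arithmetic progression of step $c_1$, every pole of $w$ is also a pole of $w(z-c_1)$ and your local order-matching never gets off the ground. You also do not treat the possibility $\deg_wP\le\deg_wQ$ at all: in that case $R$ stays bounded at poles of $w$, so pole matching at $z_0$ gives no information.

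The paper proceeds quite differently. It starts not at a pole of $w$ but at a generic $d$-point $z_0$ (where $w(z_0)=d$, $d$ a nonzero root of $Q$) and \emph{iterates}: $z_0$ forces a pole of some $w(z-c_{i_1})$; shifting the equation by $-c_{i_1}$ forces a pole of some $w(z-c_{i_2}-c_{i_1})$; and so on. When $\deg_wP-\deg_wQ\ge2$ the pole orders at least double at each step, and when $\deg_wP\le\deg_wQ$ one still accumulates enough distinct poles along the chain. Either way one obtains an unweighted counting inequality of the form
\[
\sum_{j=1}^{n} n\!\left(r,\frac{1}{w-d_j}\right)\le n\tau\, n(r+s,w)+O(1),\qquad \tau\in(0,1),
\]
and then invokes a Halburd--Korhonen type lemma (Lemma~\ref{L6} in the paper) to conclude that such an inequality forces $\limsup_{r\to\infty}\log T(r,w)/r>0$, contradicting the hypothesis. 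Your last paragraph gestures at exactly this ``forced-shift correspondence,'' but the iteration together with Lemma~\ref{L6} is the actual mechanism, and without it the local argument you give does not close. For part~(iii) the paper again uses the same pole-order doubling iteration (when $w$ has enough poles or zeros) followed by a Weierstrass factorisation argument when $N(r,w)+N(r,1/w)=o(T(r,w))$; the Clunie-type shortcut you propose for the entire case is not the route taken, and your claim that the forced-shift argument yields only finitely many zeros is not correct as stated (zeros of $w$ need not produce poles of $R$ when $R$ is a polynomial in $w$).
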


Remark that the assumption $\limsup_{r\rightarrow\infty}\frac{\log T(r, w)}{r}=0$ is better than the condition of hyperorder strict less than one in Theorem \ref{TA}, based on the improvement of the difference version of logarithmic derivative lemma \cite{cao-xu, zheng-korhonen-2018}. We may call it minimal hypertype.\par

Below we give several examples to explain Theorem \ref{T3}. The first and second examples are given to show the inequality ``$\deg_{w}(P)=\deg_{w}(Q)+1=k+1$" is possible to get and thus the conclusion (ii) in Theorem \ref{T3} is sharp for both rational function solutions and transcendental meromorphic solutions.\par

\begin{example} It is easy to check that the rational function $w=\frac{1}{z}$ is an admissible solution of the delay differential equation $$w'(z)=w(z)\left[\frac{-w^{2}(z)+2w(z)}{w(z)-1}+w(z-1)
\right]$$ with constant coefficients. Let $P(z, w(z)):=-w^{2}(z)+2w(z)$ and $Q(z, w(z)):=w(z)-1.$ Then we have $\deg_{w}(P)=1\deg_{w}(Q)+1=2=k+1,$ where $k=1$ is the number of the delays. This means that the inequality ``$\deg_{w}(P)=\deg_{w}(Q)+1=k+1$" is possible to get and thus the conclusion of (ii) in Theorem \ref{T3} is sharp for admissible rational solutions.
\end{example}

\begin{example}\cite[Example 1.4]{song-liu-ma} It is easy to check that the meromorphic function $w(z)=\frac{1}{e^{z}+1}$ is an admissible solution of the delay differential equation $$w'(z)=w(z)\left(\frac{w^{2}(z)+\frac{2e}{1-e}w(z)-\frac{e}{1-e}}{w(z)+\frac{e}{1-e}}-w(z+1)\right).$$ Let $P(z, w(z)):=w^{2}(z)+\frac{2e}{1-e}w(z)-\frac{e}{1-e}$ and $Q(z, w(z)):=w(z)+\frac{e}{1-e}.$ Then we have $\deg_{w}(P)=\deg_{w}(Q)+1=2=k+1,$  where $k=1$ is the number of the delays. This implies that the inequality ``$\deg_{w}(P)=\deg_{w}(Q)+1=k+1$" is possible to get and thus the conclusion (ii) in Theorem \ref{T3} is sharp for admissible transcendental meromorphic solutions.\end{example}

The following example shows that it is necessary of the condition of minimal hypertype $\limsup_{r\rightarrow\infty}$ $\frac{\log T(r, w)}{r}=0$ in Theorem \ref{T3}.\par

\begin{example} Let $n\geq 4.$ It is easy to check that the entire function $w(z)=e^{e^{z}}$ is an admissible solution of the delay differential equation $$w'(z)=w(z)\left(e^{z}+w^{n}(z)-w(z+\log n)\right).$$ Since $T(r, w)=\frac{r}{\pi},$ we have $\limsup_{r\rightarrow\infty}\frac{\log T(r, w)}{r}=\frac{1}{\pi}>0.$ Let $Q(z, w(z)):=1$ and $P(z, w(z))=R(z, w(z))=e^{z}+w^{n}(z).$ Then we have $n=\deg_{w}(R)>k+2=3$ instead of $\deg_{w}(R)\leq k+2,$ where $k=1$ is the number of delays. This implies that in Theorem \ref{T3}, the assumption of $\limsup_{r\rightarrow\infty}$ $\frac{\log T(r, w)}{r}=0$ of the growth of solutions is necessary.\end{example}

The following example implies that there is some admissible meromorphic solutions such that the inequality $\deg_{w}(P)=\deg_{w}(R)=1$ holds, under the assumption of the case (iii) of Theorem \ref{T3}.\par

\begin{example}It is not difficult to deduce that the meromorphic function $w(z)=\frac{1}{e^{2\pi iz}-1}$ is an admissible solution of the delay differential equation
$$w'(z)=w(z)\left[-2\pi iz+b(z)w(z)+\sum_{j=1}^{k}b_{j}(z)w(z+k)\right],$$  where the polynomials $b_{0}, b_{1}, \ldots, b_{k}$ satisfy $b(z)+b_{1}(z)+\ldots+b_{k}\equiv 2\pi iz.$ Let $P(z, w(z)):=-2\pi iz+b(z)w(z)$ and $Q(z, w(z)):=1.$ Then we get $\deg_{w}(P)=\deg_{w}(R)=1.$ This implies that the inequality `$\deg_{w}(P)=\deg_{w}(R)=1$" is possible to get and thus the  conclusion of (iii) in Theorem \ref{T3} is sharp for admissible meromorphic solutions.
\end{example}

Next, we will deeply study the logistic delay differential equations with coefficients meromorphic on the complex plane $\mathbb{C}$ of the Lenhart-Travis' type  (that is, $\deg_{w}Q=0$ and $\deg_{w}P\leq 1$ in the equation \eqref{E2}) \begin{equation}\label{E1.1} w'(z)=w(z)\left[a_{0}(z)+a_{1}(z)w (z)+\sum_{j=1}^{k}b_{j}(z)w(z-c_{j})\right]\end{equation} where the delays $c_{j}$ are distinct constants of $\mathbb{C}^{*}=\mathbb{C}\setminus\{0\},$  the coefficients $a_{0}, a_{1}$ and $b_{1}, \ldots, b_{k}$ are meromorphic functions, and each of $b_{1}, \ldots, b_{k}$ is not identically equal to zero.\par

Without loss of generality, we may assume that
\begin{itemize}
  \item There are no any $m\in\{1, \ldots, k\}$ such that either $w(z-c_{m})\equiv \alpha(z) w(z)$ or $w(z-c_{m})\equiv\alpha(z) w(z-c_{n}),$ where $T(r,\alpha)=o(T(r, w))$ and $n\in\{1, \ldots, k\}\setminus\{m\}.$
\end{itemize}
In fact, if this case happened, then the equation \eqref{E1.1} (or \eqref{E2}) could be reduced to the delay differential equation with just fewer delays than $k.$ We may call the equation \eqref{E1.1}  (or \eqref{E2})  with \emph{reduced form}, provided that all nontrivial solutions of \eqref{E1.1}  (or \eqref{E2})  satisfy this assumption.\par

By introducing the concept of the reduced form for the first time, we can focus on the logistic delay differential equations \eqref{E1.1} and obtain that ``most" of admissible meromorphic solutions should have a pole at least. It remains open whether the condition of $\overline{N}(r, \frac{1}{w})=O(N(r, \frac{1}{w}))$(or assume that zeros of admissible entire solutions are of uniformly bounded multiplicities, which has ever appeared in \cite{cao-xu-chen}) can be deleted or not.\par

\begin{theorem}\label{T1} Any admissible meromorphic solution $w$ of the Lenhart-Travis' type logistic delay differential equation \eqref{E1.1} with reduced form can not be an entire function satisfying $\overline{N}(r, \frac{1}{w})=O(N(r, \frac{1}{w})).$
\end{theorem}

In other words, Theorem \ref{T1} shows that all admissible entire solutions $w$ of the equation \eqref{E1.1} with $\overline{N}(r, \frac{1}{w})=O(N(r, \frac{1}{w}))$ must satisfy the first order difference equations $$w(z-c_{j})\equiv \alpha(z) w(z)$$ or $$w(z-c_{m})\equiv\alpha(z) w(z-c_{n}),$$ where $j\in\{1, \ldots, k\},$ $1\leq m<n\leq k,$ and $\alpha$ is small with respect to $w.$ This leads people to only need study the admissible entire solution of the difference equation $$w(z-c)+a(z)w(z)=0.$$ We note that this kind of first order difference equations has been investigated deeply by Z. X. Chen \cite{chen, chen-1, chen-book} and others. In particular, if the equation \eqref{E1.1} for $k=1$ which thus reduces to
\begin{equation}\label{Ee}
    w'(z)=w(z)\left[a_{0}(z)+a_{1}(z)w(z)+b_{1}(z)w(z-c_{1})\right]
\end{equation} is not a reduced form, then there exists a small meromorphic function $a(z)$ with respect to $w(z)$ such that $w(z-c_{1})\equiv a(z)w(z).$ Then this equation becomes
\begin{equation}\label{EEE}w'(z)=\left[a_{1}(z)+a(z)b_{1}(z)\right]w(z)^{2}+a_{0}(z)w(z).\end{equation}  It follows from \cite[Theorem 9.1.12]{laine} that any admissible meromorphic solution of the Riccati differential equation  $w'=A_{0}(z)+A_{1}(z)w+A_{2}(z)w^{2},$ where $A_{2}(z)\not\equiv 0$  satisfies $\delta(f, \infty)=1-\limsup_{r\rightarrow\infty}\frac{N(r, w)}{T(r, w)}=0.$ Hence the equation \eqref{EEE} having an admissible entire solution implies that $a_{1}(z)+a(z)b_{1}(z)
\equiv 0.$ Therefore, we have the following result.\par

\begin{theorem}\label{T5}If the logistic delay differential equation \eqref{Ee} which is not a reduced form has an admissible entire solution, then all admissible meromorphic solutions of \eqref{Ee} must satisfy both $w'(z)\equiv a_{0}(z)w(z)$ and $w(z-c_{1})\equiv -\frac{a_{1}(z)}{b_{1}(z)}w(z).$\end{theorem}

If the coefficient functions $a_{0}, a_{1}$ and $b_{1}$ are given, then Theorem \ref{T5} implies that it is possible to solve all admissible meromorphic solutions of equation \eqref{Ee}. Below we give an example to show how to obtain all admissible meromorphic solutions by Theorem \ref{T5}.\par

\begin{example} It is easy to check that the transcendental entire function $w=ze^{z}$ is an admissible entire solution of the delay differential equation \begin{equation}\label{EE} w'(z)=w(z)\left[\frac{z+1}{z}+\frac{1-z}{ez}w(z)+w(z-1)
\right].\end{equation} Let $a_{0}(z):=\frac{z+1}{z},$ $a_{1}(z):=\frac{1-z}{ez}$ and $b_{1}(z):=1.$ Since $w(z)=ze^{z}$ only has a simple zero $z=0,$ it satisfies $\overline{N}(r, \frac{1}{w})=N(r, \frac{1}{w}).$ It shows that the condition of reduced form in Theorem \ref{T1} is necessary, and this implies that the equation \eqref{EE} should not be a reduced form. Then it follows from Theorem \ref{T5} that all admissible meromorphic solutions of \eqref{EE} satisfy both \begin{eqnarray} \label{EE1} w'(z)\equiv \frac{z+1}{z}w(z)\end{eqnarray} and
\begin{eqnarray}\label{EE2}
w(z-1)\equiv \frac{z-1}{ez}w(z).\end{eqnarray}
By computing the multiplicities on both sides of equation \eqref{EE1}, it is easy to get that there does not exist  any $z_{0}\in\mathbb{C}\setminus\{0\}$ such that $z_{0}$ could be either zeros or poles of $w(z).$ This means that only $z=0$ may be zeros or poles of $w(z).$ Furthermore, it follows from equation \eqref{EE1} that $z=0$ must be a zero or pole of $w(z).$  If $z=0$ is a pole of $w(z),$ then equation \eqref{EE2} gives that $z=0$ must be a pole of $w(z-1),$ and thus $z=-1$ is a pole of $w(z),$ a contradiction. If $z=0$ is a zero of $w(z),$ then it must be a simple zero. Otherwise, it would follow from \eqref{EE2} that $z=0$ must be a zero of $w(z-1),$ and thus $z=-1$ is a zero of $w(z),$ a contradiction. Hence, we get that $z=0$ must be a simple zero of $w(z).$ We may assume that $w(z)=ze^{g(z)}$ where $g(z)$ is an entire function. Submitting this into equation \eqref{EE1} gives that $g'(z)\equiv 1$ and thus $g(z)=z+K$ where $K$ is a constant.\end{example}

Hence we have an interesting corollary as follows.\par

\begin{corollary}All admissible meromorphic solutions $w(z)$ of the logistic delay differential equation \eqref{EE} must be the form $w(z)=ze^{z+K}$ where $K$ is a constant.
\end{corollary}

We remark that the conclusion (iii) of Theorem \ref{T3} shows that when $R(z, w(z))$ is degenerated to a polynomial $P(z, w(z))$ with meromorphic coefficients, each admissible meromorphic solution of the general form of equation \eqref{E2} is generated to the admissible meromorphic solutions of the logistic delay differential equation \eqref{E1.1}, and thus Theorem \ref{T3} together with Theorem \ref{T1} give the following corollary.\par

\begin{corollary} Let $\{b_{j}\}_{j=1}^{k}$ be meromorphic functions, and let $w$ be an admissible meromorphic solution of the delay differential equation \eqref{E2} with reduced form, where the two nonzero polynomials $P(z, w(z))$ and $Q(z, w(z))$ in $w$ with meromorphic coefficients are prime each other (that is, having no common factors). If $\limsup_{r\rightarrow\infty}\frac{\log T(r, w)}{r}=0$ and $\deg_{w}(Q)=0,$  then $w$ can not be entire function satisfying $\overline{N}(r, \frac{1}{w})=O(N(r, \frac{1}{w})).$
\end{corollary}

Moreover, it is interesting that if all coefficients of the delay differential equation \eqref{E1.1} are rational functions, then from the proof of Theorem \ref{T1}, one can easily see that any admissible entire function $w$ must have at most finitely many zeros. Thus in this case, the proof of Theorem \ref{T1} implies the following results which does not need the condition  $\overline{N}(r, \frac{1}{w})=O(N(r, \frac{1}{w})).$ \par

\begin{theorem} Let $a_{0},$ $a_{1},$ $b_{1}, \ldots, b_{k}$ be rational functions. Then any transcendental meromorphic solution of the logistic delay differential equation \eqref{E1.1} with reduced form  has at least one pole.
\end{theorem}

\begin{corollary} Let $\{b_{j}\}_{j=1}^{k}$ be rational functions, and let $w$ be an transcendental meromorphic solution of the delay differential equation \eqref{E2} with reduced form, where the two nonzero polynomials $P(z, w(z))$ and $Q(z, w(z))$ in $w$ with rational coefficients are prime each other (that is, having no common factors). If $\limsup_{r\rightarrow\infty}\frac{\log T(r, w)}{r}=0$ and $\deg_{w}(Q)=0,$ then $w$ has at least one pole.
\end{corollary}

At last, we consider the simplest case for $k=1$ and $\deg_{w}R=0$ in the delay differential equation \eqref{EA}, called the logistic delay differential equation \begin{equation}\label{E1.4} w'(z)=w(z)\left[a(z)+b(z)w(z-c)\right].\end{equation} We show that without the additional condition of reduced form in this case, any admissible meromorphic solution of the equation \eqref{E1.4} has at least one pole and  satisfies that $N(r, w)$ and $T(r, w)$ have the same growth category. This result improves and extends a recent result due to Song-Liu-Ma \cite[Theorem 1.7]{song-liu-ma}.\par

\begin{theorem}\label{T2} Let $c\in\mathbb{C}^{*},$  let $a$ and $b(\not\equiv 0)$ be two meromorphic functions. Then any admissible meromorphic solution $w$ of the logistic delay differential equation \eqref{E1.4} satisfies that $N(r, w)$ and $T(r, w)$ have the same growth category.
\end{theorem}

If $a$ and $b$ are rational functions, then we have the corollary.\par

\begin{corollary}\label{C12} Let $c\in\mathbb{C}^{*},$  let $a$ and $b(\not\equiv 0)$ be two rational functions. Then any transcendental meromorphic solution $w$ of the logistic delay differential equation \eqref{E1.4} satisfies that $N(r, w)$ and $T(r, w)$ have the same growth category.
\end{corollary}

The last example is given to show that the conclusions of both Theorem \ref{T2} and corollary \ref{C12} for the logistic differential equation  \eqref{E1.4} are really true. \par

\begin{example}\cite[Example 1.8]{song-liu-ma} It is easy to check that the meromorphic function $w(z)=\frac{1}{e^{2\pi iz}+1}$ is an admissible solution of the logistic delay differential equation $$w'(z)=w(z)(-2\pi i+2\pi i w(z+1)),$$ and $N(r, w)=T(r, w)=2r+O(1).$ Let $a(z):=-2\pi i$ and $b(z):=2\pi i.$ Then it satisfies the conclusions of Theorem \ref{T2} and Corollary \ref{C12}.\end{example}

Remark that the meromorphic function $w(z)=\frac{1}{e^{2\pi iz}+1}$ satisfies $w(z)\equiv w(z+1)$ in the above example, and thus it is also an admissible solution of the logistic differential equation $w'(z)=w(z)(-2\pi i+2\pi i w(z)).$ If the logistic delay differential equation \eqref{E1.4} is not reduced form, then it becomes the logistic differential equation which is a special case of the Riccati differential equation $w'=a_{0}(z)+a_{1}(z)w+a_{2}(z)w^{2},$ where $a_{2}(z)\not\equiv 0.$ It follows from \cite[Theorem 9.1.12]{laine} that any admissible meromorphic solution of the Riccati differential equation satisfies $\delta(f, \infty)=1-\limsup_{r\rightarrow\infty}\frac{N(r, w)}{T(r, w)}=0.$ This means that the conclusion of Theorem \ref{T2} is also true for logistic differential equation. \par

The remainder is the organization of this paper. In Section \ref{S2} we mainly give the proof of Theorem \ref{T3} by developing the iterative method to arbitrary $k$ distinct delays, which comes originally from Halburd and Korhonen \cite{halburd-korhonen} for two delays $\pm 1.$ More than two delays make the discussion more difficult and comprehensive. Theorem \ref{T1} and Theorem \ref{T2} are proved in Section \ref{S3} and Section \ref{S4} respectively. In order to consider without the condition of growth of solutions, we use the results on a meromorphic function and it difference due to Goldberg and Ostrovskii \cite{goldberg-ostrovskii}  instead of the difference version of logarithmic derivative lemma. \par

\section{Proof of Theorem \ref{T3}}\label{S2}
The difference version of logarithmic derivative lemma was established by Halburd-Korhonen\cite{halburd-korhonen-1} for hyperorder strictly less than one and Chiang-Feng \cite{chiang-feng} for finite order, independently. Here we introduce an improvement due to Zheng and Korhone \cite{zheng-korhonen-2018} recently, in which the growth of meromorphic function is extended to $\limsup_{r\rightarrow\infty}\frac{\log T(r, f)}{r}=0.$ The corresponding difference versions in several complex variables and in tropical geometry were obtained by  Cao-Xu \cite{cao-xu} and Cao-Zheng \cite{cao-zheng}, respectively. \par

\begin{lemma}\cite[Lemma 2.1]{zheng-korhonen-2018} \label{L0} Let $T(r)$ be a nondecreasing positive function in $[1, +\infty)$ and logarithmic convex with $T(r)\rightarrow+\infty (r\rightarrow+\infty).$  Assume that
\begin{equation}\label{E0}
\liminf_{r\rightarrow\infty} \frac{\log T(r)}{r}=0.
\end{equation} Set $\phi(r)=\max_{1\leq t\leq r}\{\frac{t}{\log T(t)}\}.$ Then given a constant $\delta\in(0, \frac{1}{2}),$ we have
\begin{equation*} T(r)\leq T(r+\phi^{\delta}(r))\leq \left(1+4\phi^{\delta-\frac{1}{2}}(r)\right)T(r), \,\, r\not\in E_{\delta},
\end{equation*}
where $E_{\delta}$ is a subset of $[1, +\infty)$ with the zero lower density. And $E_{\delta}$ has the zero upper density if \eqref{E0} holds for $\limsup.$
\end{lemma}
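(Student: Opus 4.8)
The plan is to discharge the left-hand inequality at once and to read the right-hand inequality as a Borel-type growth estimate, reducing everything to a density bound for the exceptional set. Since $T$ is nondecreasing and $\phi^{\delta}(r)\ge 0$, the bound $T(r)\le T(r+\phi^{\delta}(r))$ holds for every $r\ge 1$ with empty exceptional set, so all the content lies in the upper estimate. I would pass to $V(r):=\log T(r)$, which is nondecreasing, tends to $+\infty$, and is continuous because logarithmic convexity of $T$ forces continuity; the hypothesis reads $\liminf_{r\to\infty}V(r)/r=0$. As $\phi$ is the nondecreasing running maximum of $t/V(t)$ one has $\phi(r)\ge r/V(r)$, so the hypothesis forces $\phi(r)\to\infty$, and since $\delta-\tfrac12<0$ this gives $\phi^{\delta-\frac12}(r)\to 0$; this is exactly why the asserted factor $1+4\phi^{\delta-\frac12}(r)$ tends to $1$. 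Using $\log(1+4u)\ge 2u$ for $0\le u\le\tfrac14$, the claim reduces to showing that the increment $V(r+\phi^{\delta}(r))-V(r)$ exceeds $2\phi^{\delta-\frac12}(r)$ only on a set $E_{\delta}$ of the stated density.

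For the density estimate I would run a greedy covering. From $E_{\delta}\cap[1,R]$ extract $r_{1}<r_{2}<\cdots$ by setting $r_{1}=\min\bigl(E_{\delta}\cap[1,R]\bigr)$ and $r_{n+1}=\min\{r\in E_{\delta}:r>r_{n}+\phi^{\delta}(r_{n})\}$, so that the intervals $I_{n}=[r_{n},\,r_{n}+\phi^{\delta}(r_{n})]$ are pairwise disjoint, cover $E_{\delta}\cap[1,R]$, and lie inside $[1,R']$ with $R'=R+\phi^{\delta}(R)$. On each $I_{n}$ the defining inequality of $E_{\delta}$ gives $V(r_{n}+\phi^{\delta}(r_{n}))-V(r_{n})\ge 2\phi^{\delta-\frac12}(r_{n})$, and summing over $n$, using the disjointness of the $I_{n}$ together with the monotonicity of $V$ so that the increments telescope inside $V(R')$, yields the budget inequality
\[
\sum_{n}\phi^{\delta-\frac12}(r_{n})\le\tfrac12\,V(R')=\tfrac12\log T(R').
\]

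The hard part will be converting this budget into a genuine density bound. Since $|E_{\delta}\cap[1,R]|\le\sum_{n}\phi^{\delta}(r_{n})=\sum_{n}\phi^{\frac12}(r_{n})\,\phi^{\delta-\frac12}(r_{n})$, the difficulty is to absorb the factor $\phi^{\frac12}(r_{n})$ against the budget and still land at $o(R)$. The crude bound $\phi(r_{n})\le\phi(R')\le R'$ only gives $|E_{\delta}\cap[1,R]|=O\bigl(\sqrt{R'}\log T(R')\bigr)$, which is $o(R)$ merely when $\log T(R')=o(\sqrt{R'})$, i.e. under a hypothesis stronger than the one assumed; this gap is the crux. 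Here I would exploit the precise choice of $\phi$ as the running maximum of $t/V(t)$ rather than $r/V(r)$: writing $\phi(r)=t^{*}/V(t^{*})$ at the maximizer $t^{*}\le r$ and combining this with $\phi(r)\ge r/V(r)$ ties the size of $\phi(r)$ to the slow growth of $V$, and a case split according to whether $t^{*}$ is comparable to $r$ or much smaller — controlled with the logarithmic convexity of $T$ — should upgrade the budget to $|E_{\delta}\cap[1,R]|=o(R)$.

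Finally I would read off the two density statements from the two forms of the hypothesis. Along the sequence $R_{m}\to\infty$ furnished by $\liminf_{r\to\infty}V(r)/r=0$ (noting $\phi^{\delta}(R_{m})\le R_{m}^{\delta}=o(R_{m})$, so $R'_{m}\sim R_{m}$) the estimate gives $|E_{\delta}\cap[1,R_{m}]|/R_{m}\to 0$, which is exactly zero lower density of $E_{\delta}$; if instead $\limsup_{r\to\infty}V(r)/r=0$, the estimate holds for every large $R$ and yields zero upper density, as claimed.
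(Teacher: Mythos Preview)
The paper does not prove this lemma at all: it is quoted verbatim as \cite[Lemma~2.1]{zheng-korhonen-2018} and then used as a black box (see the subsequent Remark and the applications in the proof of Theorem~\ref{T3}). So there is no ``paper's own proof'' to compare your argument to; the relevant comparison is with the original proof in Zheng--Korhonen.

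On the merits of your sketch: the reduction is correct (the left inequality is trivial, and the inequality $\log(1+4u)\ge 2u$ on $[0,\tfrac14]$ legitimately replaces the target by the increment condition on $V=\log T$), and the greedy covering / telescoping step yielding $\sum_{n}\phi^{\delta-\frac12}(r_{n})\le\tfrac12 V(R')$ is the standard Borel-type move. However, you have a genuine gap precisely where you say ``this gap is the crux.'' You observe that the crude estimate $\phi^{1/2}(r_{n})\le\phi^{1/2}(R')$ only gives $|E_{\delta}\cap[1,R]|\le\tfrac12\phi^{1/2}(R')\,V(R')$, and this is \emph{not} $o(R)$ under the stated hypothesis: from $\phi(R)\ge R/V(R)$ one gets $\phi^{1/2}(R)V(R)\ge\sqrt{R\,V(R)}\to\infty$, so the bound is actually worse than $R$, not better. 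Your proposed fix --- ``a case split according to whether $t^{*}$ is comparable to $r$ or much smaller, controlled with the logarithmic convexity of $T$, should upgrade the budget'' --- is not an argument; it is a hope. Nothing in the sketch explains how logarithmic convexity or the maximizer location converts $\sum_{n}\phi^{\delta-1/2}(r_{n})\le\tfrac12 V(R')$ into $\sum_{n}\phi^{\delta}(r_{n})=o(R)$, and the factor you need to absorb is of order $\phi^{1/2}$, which is unbounded. Until that step is actually carried out, the proof is incomplete. If you intend to supply a self-contained proof rather than cite the lemma, you will need to reproduce the mechanism from Zheng--Korhonen (or an equivalent one) that handles exactly this passage.
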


\begin{remark}Note that $\phi^{\delta}(r)\rightarrow\infty$ and $\phi^{\delta-\frac{1}{2}}(r)\rightarrow 0$ as $r\rightarrow \infty$ in Lemma \ref{L1}. Then for sufficiently large $r,$  we have $\phi^{\delta}(r)\geq h$ for any positive constant $h.$ Hence, $$T(r)\leq T(r+h)\leq T(r+\phi^{\delta}(r)) \leq (1+\varepsilon)T(r), \,\, r\not\in E,$$ where $E$ is a subset of $[1, +\infty)$ with the zero lower density.\end{remark} \par

\begin{lemma}[Difference version of logarithmic derivative lemma]\cite{zheng-korhonen-2018, cao-xu}\label{L2}
Let $f$ be a nonconstant meromorphic function and let $c\in\mathbb{C}\setminus\{0\}.$ If \begin{eqnarray}\label{E1}\limsup_{r\rightarrow\infty}\frac{\log T(r, f)}{r}=0,\end{eqnarray} then
\begin{eqnarray*}
m\left(r, \frac{f(z+c)}{f(z)}\right)+m\left(r, \frac{f(z)}{f(z+c)}\right)=o\left(T(r, f)\right)
\end{eqnarray*} for all $r\not\in E,$ where $E$ is a set with zero upper density measure $E,$ i.e., $$\overline{dens}E=\limsup_{r\rightarrow\infty}\frac{1}{r}\int_{E\cap [1,r]}dt=0.$$
\end{lemma}

The following lemma is due to Halburd and Korhonen \cite[Lemma 2.1]{halburd-korhonen}. Originally, they considered transcendental meromorphic solutions of equation $P(z, w)=0$ with rational coefficients. Here we consider admissible meromorphic solutions. Since it need only modify the proof by making use of the improvement of difference version of logarithmic derivative lemma (Lemma \ref{L2}) and the definition of small function, we omit the detail of its proof.\par

\begin{lemma} \label{L6} Let $w$ be an admissible meromorphic solution of the differential difference equation \begin{eqnarray*}&&P(z, w)\\&=&\sum_{l\in L}b_{l}(z)w(z)^{l_{0,0}}w(z+c_{1})^{l_{1, 0}}\cdots w(z+c_{\nu})^{l_{\nu, 0}}[w'(z)]^{l_{0, 1}}\cdots [w^{(\mu)}(z+c_{\nu})]^{l_{\nu, \mu}}\\&=&0\end{eqnarray*} where $c_{1}, \ldots, c_{\nu}$ are distinct complex constants, $L$ is a finite index set consisting of elements of the form $l=(l_{0, 0}, \ldots, l_{\nu, \mu})$ and the coefficients $b_{l}$ are meromorphic functions small with respect to $w$ for all $l\in L.$ Let $a_{1}, \ldots,
a_{k}$ be meromorphic functions small with respect to $w$ such that $P(z, a_{j})\not\equiv 0$ for all $j=1, \ldots, k.$ If there exist $s>0$ and $\tau\in(0, 1)$ such that
$$\sum_{j=1}^{k}n(r, \frac{1}{w-a_{j}})\leq k\tau n(r+s, w)+O(1),$$
then $\limsup_{r\rightarrow\infty}\frac{\log T(r, w)}{r}>0.$
\end{lemma}

\begin{proof}[Proof of Theorem \ref{T3}]Suppose that $w$ is an admissible meromorphic solution of equation \eqref{E2} and satisfies $\limsup_{r\rightarrow\infty}\frac{\log T(r, w)}{r}=0.$ Then by the first main theorem and Lemma \ref{L1}, it follows from \eqref{E2} that
\begin{eqnarray*}&&
T(r, R(z, w(z)))\\\nonumber&\leq& T(r, \frac{w'(z)}{w(z)})+\sum_{j=1}^{k}T(r, w(z-c_{j}))+\sum_{j=1}^{k}T(r, b_{j}(z))+O(1)\\\nonumber
&\leq& N(r, \frac{w'(z)}{w(z)})+\sum_{j=1}^{k}N(r, w(z-c_{j}))+\sum_{j=1}^{k}m(r, w(z-c_{j}))+o(T(r, w))\end{eqnarray*}for all $r\not\in E_1,$ where $E_1$ is a set with finite logarithmic measure (obviously, zero upper density measure). Noting that $N(r, \frac{w'(z)}{w(z)})\leq\overline{N}(r, w(z))+\overline{N}(r, \frac{1}{w(z)}),$ and combing with Lemma \ref{L2}. we then obtain
\begin{eqnarray*}&&T(r, R(z, w(z)))\\\nonumber&\leq& \overline{N}(r, w(z))+\overline{N}(r, \frac{1}{w(z)})+\sum_{j=1}^{k}N(r, w(z-c_{j}))+\sum_{j=1}^{k}m(r, \frac{w(z-c_{j})}{w(z)})\\\nonumber&&+m(r, w(z))+o(T(r, w))\\\nonumber
&\leq& \overline{N}(r, w(z))+\overline{N}(r, \frac{1}{w(z)})+\sum_{j=1}^{k}N(r, w(z-c_{j}))+m(r, w(z))+o(T(r, w))\end{eqnarray*}
for all $r\not\in E=E_{2}\cup E_{1}$ where $E_{2}$ is a set with zero upper density measure. This together with Lemma \ref{L6} gives
\begin{eqnarray}\label{E2.1}&&
T(r, R(z, w(z)))\\\nonumber
&\leq& \overline{N}(r, w(z))+\overline{N}(r, \frac{1}{w(z)})+\sum_{j=1}^{k}N(r+|c_{j}|, w(z))+m(r, w(z))+o(T(r, w))\\\nonumber
&\leq& \overline{N}(r, w(z))+\overline{N}(r, \frac{1}{w(z)})+kN(r, w(z))+m(r, w(z))+o(T(r, w))\\\nonumber
&\leq& (k+2)T(r, w(z))+o(T(r, w))
\end{eqnarray}for all $r\not\in E,$ where $E$ is a set with $\overline{dens}E=0.$ On the other hand, we get from the Valiron-Mohon'ko theorem (see for example in \cite{laine}) that
$$T(r, R(z, w(z)))=\deg_{w}(R(z, w(z)))T(r,w(z))+o(T(r, w)).$$Hence \eqref{E2.1} implies that
$$[\deg_{w}(R(z, w(z)))-(k+2)]T(r, w(z))\leq o(T(r, w))$$
for $r\not\in E.$ Hence $\deg_{w}(R(z, w(z)))\leq k+2.$\par

(i). Since $Q(z, w(z))$ has at least one non-zero root, and has no common factors with $P(z, w(z)),$ we may suppose that $Q(z, w(z))$ has just $n$ $(\deg_{w}(Q)\geq n\geq 1)$ distinct non-zero roots but not roots of $P(z, w(z)),$ say $d_{1}(z), \ldots, d_{n}(z),$ which are meromorphic functions small with respect to $w,$  such that the equation \eqref{E2} is rewritten as
\begin{eqnarray}\label{E2.2}
&&\frac{w'(z)}{w(z)}+\sum_{j=1}^{k}b_{j}(z)w(z-c_{j})\\\nonumber &=&R(z, w(z))=\frac{P(z, w(z))}{\widetilde{Q}(z, w(z))\prod_{j=1}^{n}[w(z)-d_{j}(z)]^{l_{j}}},
\end{eqnarray} where $(l_{1}, \ldots, l_{n})\in\mathbb{N}^{n},$  $\widetilde{Q}(z, w(z))$ is an irreducible polynomial in $w(z)$ having no common factors with $P(z, w(z)),$ $d_{1}, \ldots, d_{n}$ are not roots of  $P(z, w(z))$ and $\widetilde{Q}(z, w(z)).$ Obviously,  $\deg_{w}(\widetilde{Q})+\sum_{j=1}^{n}l_{j}=\deg_{w}(Q).$ Then none of $d_{1}, \ldots, d_{n}$ is an admissible solution of \eqref{E2.2}. \par

Assume that $z_{0}\in \mathbb{C}$ is a zero of $w(z)-d_{j}(z)$ $(j\in\{1, \ldots, n\}),$ say $w(z)-d_{1}(z),$ with multiplicity $t,$ but not a zero or a pole of any small meromorphic coefficients of \eqref{E2.2} and $P(z_{0}, w(z_{0}))\neq 0.$ This kind of points $z_{0}$ are called generic roots of $w-d_{1}$ with multiplicity $t.$ Since the coefficients of \eqref{E2.2} are all small with respect to $w,$ their counting functions are estimated into $o(T(r, w)).$ Hence we may only consider generic roots below.\par

By \eqref{E2.2}, we get that at least one of $w(z-c_{j})$ $(j\in \{1, \ldots, k\}),$ say $w(z-c_{1}),$ has a pole at $z=z_{0}$ with multiplicity at least $l_{1}t.$  Shifting the equation \eqref{E2.2} with $-c_{1}$ gives
\begin{eqnarray}\label{E2.3}&&
\frac{w'(z-c_{1})}{w(z-c_{1})}+\sum_{j=1}^{k}b_{j}(z-c_{1})w(z-c_{j}-c_{1})\\\nonumber&=&\frac{P(z-c_{1}, w(z-c_{1}))}{\widetilde{Q}(z-c_{1}, w(z-c_{1}))\prod_{j=1}^{n}\left[w(z-c_{1})-d_{j}(z-c_{1})\right]^{l_{j}}}.
\end{eqnarray} Then $z_{0}$ is a pole of $\frac{w'(z-c_{1})}{w(z-c_{1})}$ with simple multiplicity.\par

Case 1. Assume that $\deg_{w}(P)\leq \deg_{w}(Q).$ Then we discuss according to the following steps.\par

Step 1. Then \eqref{E2.3} implies that
at least one of $w(z-c_{j}-c_{1})$ $(j\in \{1, \ldots, k\}),$ say $w(z-c_{2}-c_{1}),$ has a pole at $z=z_{0}$ with multiplicity at least one. This implies $c_{1}+c_{2}\neq 0.$
Shifting the equation \eqref{E2.3} with $-c_{2}$ gives
\begin{eqnarray}\label{E2.4}&&
\frac{w'(z-c_{2}-c_{1})}{w(z-c_{2}-c_{1})}+\sum_{j=1}^{k}b_{j}(z-c_{2}-c_{1})w(z-c_{j}-c_{2}-c_{1})\\\nonumber&=&\frac{P(z-c_{2}-c_{1}, w(z-c_{2}-c_{1}))}{\widetilde{Q}(z-c_{2}-c_{1}, w(z-c_{2}-c_{1}))\prod_{j=1}^{n}\left[w(z-c_{2}-c_{1})-d_{j}(z-c_{2}-c_{1})\right]^{l_{j}}}.
\end{eqnarray} This implies that $z_{0}$ is a pole of $\frac{w'(z-c_{2}-c_{1})}{w(z-c_{2}-c_{1})}$ with simple multiplicity.\par

Step 2. Firstly, assume that there exists one term $m\in\{1, 2, \ldots, k\}$ such that  $c_{m}+c_{2}=0,$ and thus $w(z-c_{m}-c_{2}-c_{1})\equiv w(z-c_{1}).$ Then we stop the process and discuss as the following two subcases.\par

Subcase 1.1. Suppose that $l_{1}t>1.$ Then by \eqref{E2.4} we get that there exists at least another term $\hat{m}\in \{1, \ldots, k\}\setminus\{m\}$ such that $w(z-c_{\hat{m}}-c_{2}-c_{1})$ has a pole at $z=z_{0}$ with multiplicity at least $l_{1}t.$ Hence, even though $l_{j}t>1$ holds for each $w(z)-d_{j}(z)$ $(j\in\{1, \ldots, n\}),$ we obtain
\begin{eqnarray}\label{E3.1}
\sum_{j=1}^{n}n(r, \frac{1}{w-d_{j}})\leq K n(r+|c_{\hat{m}}|+|c_{2}|+|c_{1}|, w)+O(1).
\end{eqnarray}where $K:=\max_{j=1}^{n}\{\frac{t}{l_{j}t+1+l_{j}t}\}.$
\par

Subcase 1.2. Suppose that $l_{1}t=1.$ Then shifting the equation \eqref{E2.4} with $-c_{m}$ gives
\begin{eqnarray*}&&
\frac{w'(z-c_{m}-c_{2}-c_{1})}{w(z-c_{m}-c_{2}-c_{1})}+\sum_{j=1}^{k}b_{j}(z-c_{m}-c_{2}-c_{1})w(z-c_{j}-c_{m}-c_{2}-c_{1})\\\nonumber&=&\frac{1}{\widetilde{Q}(z-c_{m}-c_{2}-c_{1}, w(z-c_{m}-c_{2}-c_{1}))}\times\\&&\frac{P(z-c_{m}-c_{2}-c_{1}, w(z-c_{m}-c_{2}-c_{1}))}{\prod_{j=1}^{n}\left[w(z-c_{m}-c_{2}-c_{1})-d_{j}(z-c_{m}-c_{2}-c_{1})\right]^{l_{j}}}.
\end{eqnarray*} This equation is just the equation \eqref{E2.3}, since $c_{m}+c_{2}=0.$ We have obtained that $w(z-c_{2}-c_{1})$ has a pole at $z=z_{0}$ with multiplicity at least $l_{1}t=1.$ Hence, even though $l_{j}t=1$ holds for each $w(z)-d_{j}(z)$ $(j\in\{1, \ldots, n\}),$ we obtain \begin{eqnarray}\label{E3.2}
\sum_{j=1}^{n}n(r, \frac{1}{w-d_{j}})&\leq& \max_{j=1}^{n}\{\frac{t}{l_{j}t+1}\}n(r+|c_2|+|c_1|, w)+O(1)
\\\nonumber&=& \frac{1}{2}n(r+|c_2|+|c_1|, w)+O(1).
\end{eqnarray}\par

It may also possible that there exist some $w(z)-d_{j}(z)$ $(j\in (1, \ldots, n))$ whose zeros satisfy the Subcase 1.1 and others $w(z)-d_{j}(z)$ whose zeros satisfy the Subcase 1.2. Any way, we get from \eqref{E3.1} and \eqref{E3.2} that
\begin{eqnarray}\label{E3.2'} &&
\sum_{j=1}^{n}n(r, \frac{1}{w-d_{j}})\\\nonumber&\leq& \max\{K, \frac{1}{2}\} n(r+|c_{\hat{m}}|+|c_{2}|+|c_{1}|, w)+O(1).
\end{eqnarray}\par

Secondly, assume that there does not exist any term $m\in\{1, 2, \ldots, k\}$ such that  $c_{m}+c_{2}=0.$ Then we continue the process to the third step below.\par

Step 3. Now by \eqref{E2.4},  we  get that at least one of $w(z-c_{j}-c_{2}-c_{1})$ $(j\in \{1, \ldots, k\}),$ say $w(z-c_{3}-c_{2}-c_{1}),$ has a pole at $z=z_{0}$ with  multiplicity at least one. This implies $c_{3}+c_{2}+c_{1}\neq 0.$ Shifting the equation \eqref{E2.4} with $-c_{3}$ gives
\begin{eqnarray}\label{E2.6}\nonumber&&
\frac{w'(z-c_{3}-c_{2}-c_{1})}{w(z-c_{3}-c_{2}-c_{1})}+\sum_{j=1}^{k}b_{j}(z-c_{3}-c_{2}-c_{1})w(z-c_{j}-c_{3}-c_{2}-c_{1})\\\nonumber&=&
\frac{1}{\widetilde{Q}(z-c_{3}-c_{2}-c_{1},w(z-c_{3}-c_{2}-c_{1}))}\times\\&&
\frac{P(z-c_{3}-c_{2}-c_{1}, w(z-c_{3}-c_{2}-c_{1}))}{\prod_{j=1}^{n}\left[w(z-c_{3}-c_{2}-c_{1})-d_{j}(z-c_{3}-c_{2}-c_{1})\right]^{l_{j}}}.
\end{eqnarray} This implies that $z_{0}$ is a pole of $\frac{w'(z-c_{3}-c_{2}-c_{1})}{w(z-c_{3}-c_{2}-c_{1})}$ with simple multiplicity.\par

Now we give similar discussion as in the Step 2. Firstly, assume that there exists one term $m\in\{1, 2, \ldots, k\}$ such that  $c_{m}+c_{3}+c_{2}=0,$ and thus $w(z-c_{m}-c_{3}-c_{2}-c_{1})\equiv w(z-c_{1}).$ Then we stop the process and discuss as the following two subcases.\par

Subcase 1.1$^{*}$. Suppose that $l_{1}t>1.$ Then by \eqref{E2.6} we get that there exists at least another term $\hat{m}\in \{1, \ldots, k\}\setminus\{m\}$ such that $w(z-c_{\hat{m}}-c_{3}-c_{2}-c_{1})$ has a pole at $z=z_{0}$ with multiplicity at least $l_{1}t.$ Hence,even though $l_{j}t>1$ holds for each $w(z)-d_{j}(z)$ $(j\in\{1, \ldots, n\}),$ we obtain
\begin{eqnarray}\label{E3.3}
\sum_{j=1}^{n}n(r, \frac{1}{w-d_{j}})\leq K^{*} n(r+|c_{\hat{m}}|+|c_{3}|+|c_{2}|+|c_{1}|, w)+O(1).
\end{eqnarray}where $K^{*}:=\max_{j=1}^{n}\{\frac{t}{l_{j}t+1+1+l_{j}t}\}.$
\par

Subcase 1.2$^{*}$. Suppose that $l_{1}t=1.$ Then shifting the equation \eqref{E2.6} with $-c_{m}$ gives just the equation \eqref{E2.3}, since $c_{m}+c_{3}+c_{2}=0.$ We have obtained that $w(z-c_{2}-c_{1})$ has a pole at $z=z_{0}$ with multiplicity at least $l_{1}t=1.$ Hence even though $l_{j}t=1$ holds for each $w(z)-d_{j}(z)$ $(j\in\{1, \ldots, n\}),$ we obtain  \begin{eqnarray}\label{E3.4}
\sum_{j=1}^{n}n(r, \frac{1}{w-d_{j}})&\leq& \max_{j=1}^{n}\{\frac{t}{l_{j}t+1}\}n(r+|c_2|+|c_1|, w)+O(1)\\\nonumber&=& \frac{1}{2}n(r+|c_2|+|c_1|, w)+O(1).
\end{eqnarray}\par

It may also possible that there exist some $w(z)-d_{j}(z)$ $(j\in (1, \ldots, n))$ whose zeros satisfy the Subcase 1.1$^{*}$ and others $w(z)-d_{j}(z)$ whose zeros satisfy the Subcase 1.2$^{*}$. Any way, we get from \eqref{E3.3} and \eqref{E3.4} that
\begin{eqnarray}\label{E3.4'} &&
\sum_{j=1}^{n}n(r, \frac{1}{w-d_{j}})\\\nonumber&\leq& \max\{K^{*}, \frac{1}{2}\} n(r+|c_{\hat{m}}|+|c_{3}|+|c_{2}|+|c_{1}|, w)+O(1).
\end{eqnarray}\par

Secondly, assume that there exists one term $m\in\{1, 2, \ldots, k\}$ such that  $c_{m}+c_{3}=0,$ and thus $w(z-c_{m}-c_{3}-c_{2}-c_{1})=w(z-c_{2}-c_{1}).$ Then we stop the process and discuss as follows.  Shifting \eqref{E2.6} with $-c_{m}$ gives just the equation \eqref{E2.4}. We find that we come back the Step 2, and thus we obtained either one of \eqref{E3.1}, \eqref{E3.2}, \eqref{E3.2'}, \eqref{E3.3} , \eqref{E3.4} and \eqref{E3.4'}, or continue the following discussion.\par

Now assume that there does not exist one term $m\in\{1, 2, \ldots, k\}$ such that either $c_{m}+c_{3}+c_{2}=0$ or $c_{m}+c_{3}=0.$ Then we  continue the process to the Step 4 similarly as Step 3, and so on.  By this way with finite steps, finally we can always get that there exists one finite positive value $s$ depending on $|c_{1}|,\ldots, |c_{k}|,$ such that
\begin{eqnarray}\label{E2.8}\sum_{j=1}^{n} n(r, \frac{1}{w-d_{j}})\leq (n\cdot \tau)n(r+s, w)+O(1),\end{eqnarray}
where $\tau \in(0, 1)$ in Lemma \ref{L6}. Therefore, according to Lemma \ref{L6}, it follows that $\limsup_{r\rightarrow\infty}\frac{\log T(r, w)}{r}>0,$ which contradicts to the growth assumption of $w$ at the beginning of the proof.\par

Case 2. Assume that $\deg_{w}(P)> \deg_{w}(Q).$  Thus $$k+2\geq\deg_{w}(P)> \deg_{w}(Q)=\deg_{w}(\widetilde{Q})+\sum_{j=1}^{n}l_{j}\geq 1.$$

Assume that $\deg_{w}(P)-\deg_{w}(Q)\geq 2.$ We have assumed that $z_{0}\in \mathbb{C}$ is a generic root of $w(z)-d_{j}(z)$ $(j\in\{1, \ldots, k\}),$ say $w(z)-d_{1}(z),$ with multiplicity $t.$ Then again by \eqref{E2.2}  we get that at least one of $w(z-c_{j})$ $(j\in \{1, \ldots, k\}),$ say $w(z-c_{1}),$ has a pole at $z=z_{0}$ with multiplicity at least $l_{1}t;$  again by \eqref{E2.3} we get that $z_{0}$ is a pole of $$\frac{P(z-c_{1}, w(z-c_{1}))}{\widetilde{Q}(z-c_{1}, w(z-c_{1}))\prod_{j=1}^{n}\left[w(z-c_{1})-d_{j}(z-c_{1})\right]^{l_{j}}}$$ with multiplicity at least $l_{1}t(\deg_{w}(P)-\deg_{w}(Q))(\geq 2l_{1}t),$ and thus at least one of $w(z-c_{j}-c_{1})$ $(j\in \{1, \ldots, k\}),$ say $w(z-c_{2}-c_{1}),$ has a pole at $z=z_{0}$ with multiplicity at least $2l_{1}t;$
and again by \eqref{E2.4} we get that  $z_{0}$ is a pole of $$\frac{P(z-c_{2}-c_{1}, w(z-c_{2}-c_{1}))}{\widetilde{Q}(z-c_{2}-c_{1}, w(z-c_{2}-c_{1}) )\prod_{j=1}^{n}\left[w(z-c_{2}-c_{1})-d_{j}(z-c_{2}-c_{1})\right]^{l_{j}}}$$ with multiplicity at least $2l_{1}t(\deg_{w}(P)-\deg_{w}(Q))(\geq 4l_{1}t),$ and thus at least one of $w(z-c_{j}-c_{2}-c_{1})$ $(j\in \{1, \ldots, k\}),$ say $w(z-c_{3}-c_{2}-c_{1}),$ has a pole at $z=z_{0}$ with multiplicity at least $4l_{1}t.$  Again by \eqref{E2.6}, we get similarly that
at least one of $w(z-c_{j}-c_{3}-c_{2}-c_{1})$ $(j\in \{1, \ldots, k\}),$ say $w(z-c_{4}-c_{3}-c_{2}-c_{1}),$ has a pole at $z=z_{0}$ with multiplicity at least $8l_{1}t.$ Now shifting \eqref{E2.6} with $-c_{4}$ gives \begin{eqnarray*}\label{E2.9}&&
\frac{w'(z-\sum_{j=1}^{4}c_{j})}{w(z-\sum_{j=1}^{4}c_{j})}+\sum_{j=1}^{k}b_{j}(z-\sum_{j=1}^{4}c_{j})w(z-\sum_{j=1}^{4}c_{j})\\\nonumber&=&\frac{P(z-\sum_{j=1}^{4}c_{j}, w(z-\sum_{j=1}^{4}c_{j}))}{\widetilde{Q}(z-\sum_{j=1}^{4}c_{j}, w(z-\sum_{j=1}^{4}c_{j}))\prod_{j=1}^{n}\left[w(z-\sum_{j=1}^{4}c_{j})-d_{j}(z-\sum_{j=1}^{4}c_{j})\right]^{l_{j}}}.
\end{eqnarray*} Then we get similarly that
at least one of $w(z-c_{j}-c_{4}-c_{3}-c_{2}-c_{1})$ $(j\in \{1, \ldots, k\}),$ say $w(z-c_{5}-c_{4}-c_{3}-c_{2}-c_{1}),$ has a pole at $z=z_{0}$ with multiplicity at least $16l_{1}t.$ And continue to discussion in this way.  At last, for the finite positive value $s:=|c_{1}|+\ldots+|c_{k}|,$ we obtained that
\begin{eqnarray}\label{E2.10}&&\sum_{j=1}^{n} n(r, \frac{1}{w-d_{j}})\\\nonumber
&\leq& \max_{j=1}^{n}\{\frac{t}{(1+2+4+\ldots+2^{k})l_{j}t}\} n(r+s, w)+O(1)\\\nonumber
&=&(n\cdot \tau)n(r+s, w)+O(1),\end{eqnarray}
where $\tau\in (0, 1)$ in Lemma \ref{L6}. Then by Lemma \ref{L6}, we obtain again that $\limsup_{r\rightarrow\infty}\frac{\log T(r, w)}{r}>0.$ This is a contradiction.\par

Therefore, we obtain that $k+2\geq\deg_{w}(P)=\deg_{w}(Q)+1.$ \par

(ii). Assume that $Q(z, w(z))$ has at least one non-zero root and all its roots are non-zero. We only need modify the proof of \eqref{E2.1}. Notice that in this case,  it follow from \eqref{E2} that all zeros of $w(z)$ are not poles of $R(z, w(z)),$ and thus are not poles of $\frac{w'(z)}{w(z)}-\sum_{j=1}^{k}b_{j}(z)w(z-c_{j}).$ This implies that all the poles of $\frac{w'(z)}{w(z)}-\sum_{j=1}^{k}b_{j}(z)w(z-c_{j})$ appear only at poles of $w(z),$ or poles of $\{w(z-c_{j})\}_{j=1}^{k},$ or poles of $\{b_{j}(z)\}_{j=1}^{k}.$ Note that $T(r, b_{j}(z))=o(T(r, w(z)))$ for all $j=1, \ldots, k.$
Hence we have \begin{eqnarray*}
N(r, R(z, w(z)))&=& N\left(r, \frac{w'(z)}{w(z)}-\sum_{j=1}^{k}b_{j}(z)w(z-c_{j})\right)\\
&\leq&  N(r, w(z))+\sum_{j=1}^{k}N(r, w(z-c_{j}))+o(T(r, w)).
\end{eqnarray*}
Thus it follows from the first main theorem, Lemma \ref{L1}, Lemma \ref{L0} and Lemma \ref{L2} that
\begin{eqnarray*}&&
T(r, R(z, w(z)))= m(r, R(z, w(z)))+N(r, R(z, w(z)))\\
&\leq&m\left(r, \frac{w'(z)}{w(z)}-\sum_{j=1}^{k}b_{j}(z)w(z-c_{j})\right)+N(r, w(z))+\sum_{j=1}^{k}N(r, w(z-c_{j}))\\&&+o(T(r, w))\\
&\leq& \sum_{j=1}^{k}m(r, w(z-c_{j}))+N(r, w(z))+\sum_{j=1}^{k}N(r+|c_{j}|, w(z))+o(T(r, w))\\\nonumber
&\leq& T(r, w(z))+\sum_{j=1}^{k}\left(m(r, \frac{w(z-c_{j})}{w(z)})+m(r, w(z))\right)+\sum_{j=1}^{k}N(r, w(z))\\&&+o(T(r, w))\\
&\leq& (k+1)T(r, w(z))+o(T(r, w))
\end{eqnarray*}for all $r\not\in E,$ where $E$ is a set with zero upper density measure $E.$ On the other hand, we get from Valiron-Mohon'ko theorem (see for example in \cite{laine}) that
$$T(r, R(z, w(z)))=\deg_{w}(R)T(r, w)+o(T(r, w)).$$
Therefore, $\deg_{w}(R)\leq k+1.$ Combining with the conclusion of (i), we have $$\deg_{w}(P)=\deg_{w}(Q)+1\leq k+1.$$ \par

(iii). Suppose that $\deg_{w}(Q)=0$ and thus $R(z, w(z))$ is a polynomial. Without loss of generality, we may assume that $R(z, w(z))$ is just $P(z, w(z)),.$ Then we rewrite \eqref{E2} to be
\begin{eqnarray}\label{E2.13} \frac{w'(z)}{w(z)}+\sum_{j=1}^{k}b_{j}(z)w(z-c_{j})=P(z, w(z)).\end{eqnarray}
Suppose that $\deg_{w}(P)\geq 2.$ Then $k+2\geq\deg_{w}(P)\geq 2.$ \par

Assume that $w$ has infinitely many zeros or poles (or both) such that $N(r, \frac{1}{w})\neq o(T(r, w))$ or $N(r, w)\neq o(T(r, w))$ respectively. Without loss of generality, let $z_{0}$ be a generic pole (or a zero, respectively) of $w(z)$ with multiplicity $t,$ and thus should be a simple pole of $\frac{w'(z)}{w(z)}.$ Then by \eqref{E2.13}, it follows that $z_{0}$ is a pole (or a zero, respectively) of $P(z, w(z))$ with multiplicity $t\deg_{w}(P)(\geq 2t),$ and thus at least one term of $w(z-c_{j}),$ say $w(z-c_{1}),$ has a pole (or a zero, respectively) at $z_{0}$ with multiplicity at least $t\deg_{w}(P).$
Then shifting \eqref{E2.13} with $-c_{1}$ gives
\begin{eqnarray}\label{E2.14} \frac{w'(z-c_{1})}{w(z-c_{1})}+\sum_{j=1}^{k}b_{j}(z-c_{1})w(z-c_{j}-c_{1})=P(z-c_{1}, w(z-c_{1})).\end{eqnarray}
It follows from \eqref{E2.14} that $z_{0}$ is a pole (or a zero, respectively) of $P(z-c_{1}, w(z-c_{1}))$ with multiplicity $t\deg_{w}^{2}(P)(\geq 2^{2}t),$ and thus at least one term of $w(z-c_{j}-c_{1}),$  for convenience we say $w(z-2c_{1}),$ has a pole (or a zero, respectively) at $z_{0}$ with multiplicity at least $t\deg_{w}^{2}(P).$
Then shifting \eqref{E2.14} with $-c_{1}$ gives
\begin{eqnarray}\label{E2.15} && \frac{w'(z-2c_{1})}{w(z-2c_{1})}+\sum_{j=1}^{k}b_{j}(z-2c_{1})w(z-c_{j}-2c_{1})\\\nonumber&=&P(z-2c_{1}, w(z-2c_{1})).\end{eqnarray}
It follows from \eqref{E2.15} that $z_{0}$ is a pole (or a zero, respectively) of $P(z-2c_{1}, w(z-2c_{1}))$ with multiplicity $t\deg_{w}^{3}(P),$ and thus at least one term of $w(z-c_{j}-2c_{1}),$ for convenience we say $w(z-3c_{1}),$ has a pole (or a zero, respectively) at $z_{0}$ with multiplicity at least $t\deg_{w}^{3}(P).$ Continue to discuss by the way, at last we obtain that either
\begin{equation}\label{E2.17}
n(|z_{0}|+d|c_{1}|, w)\geq t\deg_{w}^{d}(P)
\end{equation} or
\begin{equation}\label{E2.18}
n(|z_{0}|+d|c_{1}|, \frac{1}{w})\geq t\deg_{w}^{d}(P)
\end{equation}
holds for all positive integer $d.$ It follows from the above inequalities, say \eqref{E2.17}, that
\begin{eqnarray}&&
\limsup_{r\rightarrow\infty}\frac{\log T(r, w)}{r}\\\nonumber&\geq&
\limsup_{r\rightarrow\infty}\frac{\log N(r, w)}{r}\\\nonumber&=&
\limsup_{d\rightarrow\infty}\frac{\log N(|z_{0}|+d|c_{1}|, w)}{|z_{0}|+d|c_{1}|}\\\nonumber&\geq&
\limsup_{d\rightarrow\infty}\frac{\log\left(t\deg_{w}^{d}(P)\right)+\log\log\left(|z_{0}|+d|c_{1}|\right)}{|z_{0}|+d|c_{1}|}\\\nonumber
&=&\frac{\log \deg_{w}(P)}{|c_{1}|}\\\nonumber
&>&0.
\end{eqnarray}
This contradicts to the assumption of the growth of $w.$\par

Therefore,  $w$ must have finite many zeros and poles, or have infinitely many zeros and poles such that $N(r, \frac{1}{w})=o(T(r, w))$ and $N(r, w)=o(T(r, w))$ respectively. Then by the Weierstrass (Hadamard) factorization theorem of entire functions (see for examples, \cite{goldberg-ostrovskii, yi-yang}), we may assume that \begin{equation}\label{E2.19}w(z)=f(z)e^{g(z)},\end{equation}
where $f$ is a meromorphic function such that \begin{eqnarray}\label{E2.20} T(r, f)+O(1)=\max\{N(r, \frac{1}{f}),\,\,\,\,\, N(r, f)\}=o(T(r, w)),\end{eqnarray} and $g$ is a nonconstant entire function such that \begin{eqnarray}\label{E2.21} T(r, e^{g})=T(r, w).\end{eqnarray} Submitting \eqref{E2.19} into \eqref{E2.13}, we get that
\begin{eqnarray*}\left[\frac{f'(z)}{f(z)}+g'(z)\right]+\sum_{j=1}^{k}b_{j}(z)e^{g(z-c_{j})}=P(z, f(z)e^{g(z)}),\end{eqnarray*}
and thus
\begin{eqnarray}\label{E2.22}\,\,\left[\frac{f'(z)}{f(z)}+g'(z)\right]+\left[\sum_{j=1}^{k}b_{j}(z)e^{g(z-c_{j})-g(z)}\right]e^{g(z)}=P(z, f(z)e^{g(z)}).\end{eqnarray}
Now it follows from Lemma \ref{L1}, Lemma \ref{L2}, \eqref{E2.20} and \eqref{E2.21} that
\begin{eqnarray*}\max\left\{T\left(r, \frac{f'(z)}{f(z)}+g'(z)\right), T\left(r, \sum_{j=1}^{k}b_{j}(z)e^{g(z-c_{j})-g(z)}\right)\right\}=o(T(r, w)).
\end{eqnarray*}Hence, by taking Nevanlinna characteristic from both sides of \eqref{E2.22}, we derive
\begin{eqnarray*}\deg_{w}(P)T(r, w)=T(r, P(z, f(z)e^{g(z)})=T(r, w)+o(T(r, w)).\end{eqnarray*} This contradicts to the assumption of $\deg_{w}(P)\geq 2.$\par

Therefore, it should be $\deg_{w}(P)\leq 1.$

\end{proof}

\section{Proof of Theorem \ref{T1}}\label{S3}
We first introduce two lemmas in complex analysis before proving Theorem \ref{T1}.\par

\begin{lemma}\cite[Theorem 1.50]{yi-yang}\label{L3}
Suppose that $f_{1}, f_{2}, \ldots, f_{n}$ are meromorphic functions and that $g_{1}, g_{2}, \ldots, g_{n}$ are entire functions satisfying the following conditions.\par

(i) $\sum_{j=1}^{n} f_{j}(z)e^{g_{j}(z)}\equiv 0;$\par

(ii) $g_{j}(z)-g_{k}(z)$ are not constants for $1\leq j<k\leq n;$\par

(iii) for $1\leq j\leq n,$ $1\leq h<k\leq n,$
$$T(r, f_{j})=o(T(r, e^{g_{h}-g_{k}})) \quad (r\rightarrow \infty, r\not\in E),$$
where $E\subset (1, \infty)$ is of finite linear measure or finite logarithmic measure.\\
Then $f_{j}(z)\equiv 0$ for all $j=1,2,\ldots, n.$
\end{lemma}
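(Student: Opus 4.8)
The plan is to argue by induction on the number $n$ of terms, using the first main theorem together with the lemma on the logarithmic derivative to control the coefficients produced when the relation is differentiated. For the base case $n=1$ the identity $f_{1}e^{g_{1}}\equiv 0$ forces $f_{1}\equiv 0$ since $e^{g_{1}}$ is zero-free; for $n=2$, if $f_{1},f_{2}\not\equiv 0$ then (i) gives $f_{1}/f_{2}\equiv -e^{g_{2}-g_{1}}$, so the first main theorem yields $T(r,e^{g_{2}-g_{1}})\le T(r,f_{1})+T(r,f_{2})+O(1)$, contradicting (iii) for the pair $h=1,k=2$ because $e^{g_{2}-g_{1}}$ is transcendental. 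Thus the statement holds for $n\le 2$.

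For the inductive step I would assume the conclusion for every relation with fewer than $n$ terms satisfying (i)--(iii), and suppose for contradiction that not every $f_{j}$ vanishes. After discarding the identically zero coefficients (which leaves (i)--(iii) intact for the surviving indices) I may relabel so that $f_{n}\not\equiv 0$. Dividing by $f_{n}e^{g_{n}}$ and setting $a_{j}:=f_{j}/f_{n}$, $h_{j}:=g_{j}-g_{n}$ gives $\sum_{j=1}^{n-1}a_{j}e^{h_{j}}+1\equiv 0$; differentiating eliminates the constant term and produces the homogeneous relation $\sum_{j=1}^{n-1}b_{j}e^{h_{j}}\equiv 0$ with $b_{j}=a_{j}'+a_{j}h_{j}'$. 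The differences $h_{p}-h_{q}=g_{p}-g_{q}$ are again nonconstant, so (ii) survives. To bound the new coefficients I would combine $T(r,a_{j})\le T(r,f_{j})+T(r,f_{n})+O(1)$ with $m(r,a_{j}'/a_{j})=o(T(r,a_{j}))$ off an exceptional set and, crucially, the identity $h_{j}'=(e^{h_{j}})'/e^{h_{j}}$, which by the lemma on the logarithmic derivative gives $T(r,h_{j}')=m(r,h_{j}')=O\big(\log\!\big(rT(r,e^{h_{j}})\big)\big)$. Granting that these estimates verify (iii) for the reduced relation, the induction hypothesis forces $b_{j}\equiv 0$, i.e. $(a_{j}e^{h_{j}})'\equiv 0$, so $a_{j}\equiv C_{j}e^{-h_{j}}$ for constants $C_{j}$. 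If some $C_{j}\ne 0$ then $e^{g_{n}-g_{j}}=a_{j}/C_{j}$, so $T(r,e^{g_{n}-g_{j}})=T(r,a_{j})+O(1)=o(T(r,e^{g_{n}-g_{j}}))$ by (iii), which is impossible; hence every $C_{j}=0$, every $f_{j}\equiv 0$ for $j<n$, and the original relation collapses to $f_{n}e^{g_{n}}\equiv 0$, forcing $f_{n}\equiv 0$ --- the desired contradiction.

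The hard part is exactly the step I flagged as ``granting that'': showing (iii) is inherited by the differentiated relation. The dangerous contribution to $b_{j}$ is $a_{j}h_{j}'$, and although $T(r,h_{j}')=O(\log(rT(r,e^{h_{j}})))$ is always negligible against $T(r,e^{h_{j}})$ itself, it need not be negligible against \emph{every} surviving difference $T(r,e^{g_{p}-g_{q}})$: if the slowest-growing pairwise difference happens to involve the pivot index $n$, the logarithmic-derivative term can be of the same order as a reduced exponential and the induction stalls. I expect to resolve this by choosing the pivot carefully rather than arbitrarily --- ordering the exponentials by the growth of $T(r,e^{g_{p}-g_{q}})$ and pivoting on an index belonging to the slowest pair, so that this pair is removed from the reduced relation and all remaining differences dominate the $O(\log(\cdots))$ terms. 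Verifying that such a pivot always exists and that all the $o(\cdot)$ estimates then hold simultaneously is the technical heart of the argument; once it is in place the remainder is the routine bookkeeping sketched above. (Since the statement is quoted from Yi--Yang, one could instead simply invoke it, but the induction above is the natural self-contained route.)
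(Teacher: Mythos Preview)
The paper does not prove this lemma at all: it is stated with a citation to Yi--Yang \cite{yi-yang} and then used as a black box in the proof of Theorem~\ref{T1}. So there is no ``paper's own proof'' to compare against, and your sketch is to be judged on its own.

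Your outline is the standard Borel-type induction, and the base cases and the endgame (once all $b_{j}\equiv 0$) are fine. The gap you yourself flag is genuine, and your proposed repair by ``pivoting on an index belonging to the slowest pair'' does not close it. After pivoting on $n$, the new coefficients carry the term $h_{j}'=(g_{j}-g_{n})'$, and the logarithmic-derivative lemma only gives $T(r,h_{j}')=O\bigl(\log\bigl(rT(r,e^{g_{j}-g_{n}})\bigr)\bigr)$. You need this to be $o\bigl(T(r,e^{g_{p}-g_{q}})\bigr)$ for \emph{every} surviving pair $p,q\in\{1,\dots,n-1\}$, but removing the slowest pair does not prevent some other surviving pair from still being slow while $T(r,e^{g_{j}-g_{n}})$ is enormous. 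Concretely, with four exponents one can have $g_{1}-g_{2},\,g_{1}-g_{3},\,g_{2}-g_{3}$ all of order one while $g_{j}-g_{4}$ are of infinite order; pivoting on any of $1,2,3$ leaves a slow pair among the survivors, yet $\log T(r,e^{g_{j}-g_{\mathrm{pivot}}})$ is not $o$ of it. So a pure pivot-choice argument does not suffice.

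What does work is to feed the relation back into the growth estimates rather than treating the $e^{g_{h}-g_{k}}$ as independent. From $\sum_{j<n} a_{j}e^{h_{j}}=-1$ and its first $n-2$ derivatives one obtains a linear system for the $e^{h_{j}}$ whose coefficient matrix has entries built from the $a_{j}$, their derivatives, and the $(g_{p}-g_{q})'$; either the determinant is nonzero, in which case Cramer's rule expresses each $e^{h_{j}}$ through quantities whose characteristics are $o(T(r,e^{h_{j}}))$ and one gets an immediate contradiction, or the determinant vanishes identically, which yields a nontrivial linear dependence among the rows and hence a shorter relation of the same shape to which induction applies cleanly. This Wronskian-style organisation is how the argument is carried out in \cite{yi-yang}; your differentiate-once-and-induct version can be made to work, but only after this extra input, not by pivot choice alone.
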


\begin{lemma}\cite[Theorem 1.6 of Charpter 2]{goldberg-ostrovskii} \label{L4} Let $f(z)$ be a meromorphic function, and let $f_{1}=f(az+b),$ $a\neq 0.$ Then $f(z)$ and $f_{1}(z),$ as well as $N(r, f)$ and $N(r, f_{1})$ are of the same growth category.
\end{lemma}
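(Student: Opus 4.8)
The plan is to deduce the lemma from a two-sided comparison of the characteristics of $f$ and $f_1$ whose arguments differ only by the affine change of scale $r\mapsto|a|r$ together with a negligible additive shift. Concretely, I would aim to show that
\[
T(|a|r-|b|,\,f)\;\le\;T(r,f_1)\;\le\;T(|a|r+|b|,\,f)
\]
holds for all large $r$ up to an $O(\log r)$ additive error and a multiplicative factor that can be made as close to $1$ as we please, together with the same bounds for $N$ in place of $T$. Granting this, the conclusion follows at once: taking logarithms and dividing by $\log r$ gives equality of orders, since $\log(|a|r\pm|b|)=\log r+O(1)$; and because the factor $|a|$ rescales the type of $f$ only by the positive constant $|a|^{\rho(f)}$ while the additive $\pm|b|$ is asymptotically irrelevant, the type-class (minimal, normal, or maximal) and the convergence/divergence class — that is, the growth category — are preserved. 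The identical reading of the $N$-bounds settles the statement for $N(r,f)$ and $N(r,f_1)$.

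Factoring $az+b=a(z+b/a)$, I would set $h(z):=f(az)$, so that $f_1(z)=h(z+b/a)$, and treat the dilation $f\mapsto h$ and the translation $h\mapsto f_1$ separately. The dilation is exact: the map $w=az$ sends the circle $|z|=r$ bijectively onto $|w|=|a|r$, so rotation invariance of the angular mean gives $m(r,h)=m(|a|r,f)$, while the poles of $h$ in $|z|\le r$ are precisely the poles of $f$ in $|w|\le|a|r$, whence $n(r,h)=n(|a|r,f)$ and $N(r,h)=N(|a|r,f)+O(1)$. Therefore $T(r,h)=T(|a|r,f)+O(1)$, and likewise for $N$.

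The translation $h\mapsto f_1$ is the substantive step, and here lies the main obstacle. Writing $d:=|b|/|a|$, the disk $\{|z|\le r\}$ is carried onto the shifted disk $\{|w-b/a|\le r\}$, which for $r>d$ is trapped between the concentric disks $\{|w|\le r-d\}$ and $\{|w|\le r+d\}$. For counting functions this inclusion is exactly what is needed, since the $a$-points of $f_1$ in $|z|\le r$ are the $a$-points of $h$ in the shifted disk, giving $n(r-d,\cdot)\le n(r,\cdot)\le n(r+d,\cdot)$ uniformly in the target value. The difficulty is the proximity term: $m(r,f_1)$ is the mean of $\log^{+}|h|$ over a circle \emph{not centred at the origin}, so, unlike in the dilation step, it coincides with no single $m(R,h)$ and resists a direct change of variables. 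To circumvent this I would express the entire characteristic through counting functions by Cartan's identity
\[
T(r,g)=\frac{1}{2\pi}\int_0^{2\pi}N\!\left(r,\frac{1}{g-e^{i\theta}}\right)d\theta+\log^{+}|g(0)|,
\]
valid for every meromorphic $g$ \cite{hayman-1}. Applying this to $g=f_1$ and to $g=h$ and inserting the uniform counting-function sandwich (the case of poles, i.e.\ the value $\infty$, being treated the same way) reduces the translation comparison to the elementary fact that $N(r\pm d,\cdot)$ differs from $N(r,\cdot)$ only within the growth category. The single technical point is the $dt/t$ weight in the definition of $N$: after the substitution $s=t\pm d$ it produces a factor $\tfrac{1}{s\mp d}$, which on $s\ge r_0$ is at most $(1+d/r_0)/s$, so choosing the fixed threshold $r_0$ large forces this multiplicative constant as close to $1$ as desired — and that is all the growth category tolerates.
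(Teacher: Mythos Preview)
The paper does not supply its own proof of this lemma; it is quoted directly from Goldberg--Ostrovskii \cite{goldberg-ostrovskii}, so there is nothing in the paper to compare against beyond the citation itself. Your reconstruction is correct and follows the standard route taken in that reference: factor the affine map into an exact dilation, for which $T(r,f(a\,\cdot))=T(|a|r,f)+O(1)$ and $N(r,f(a\,\cdot))=N(|a|r,f)+O(1)$ by the change of variable $w=az$, followed by a pure translation handled through the disk inclusions $\{|w|\le r-d\}\subset\{|w-b/a|\le r\}\subset\{|w|\le r+d\}$. Your use of Cartan's identity to transfer the uniform counting-function sandwich to $T$ is a clean way to bypass the off-centre proximity integral, and it delivers exactly the $(1+o(1))$-type two-sided bounds needed for the growth category to be preserved.

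It is worth noting that the translation half of your argument in fact proves the paper's Lemma~\ref{L5} as well, which is likewise quoted from \cite{goldberg-ostrovskii} without proof; so your proposal supplies both cited results at once. One cosmetic slip: the estimate $\tfrac{1}{s-d}\le (1+d/r_0)\tfrac{1}{s}$ requires $s-d\ge r_0$, i.e.\ $s\ge r_0+d$, rather than $s\ge r_0$ as written; this is immaterial since after the substitution $s=t+d$ with $t\ge r_0$ the range is automatically $s\ge r_0+d$.
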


\begin{proof}[Proof of Theorem \ref{T1}]\par

Step 1. Obviously, an admissible solution of equation \eqref{E1.1} implies that it can not be a constant. Assume that $w$ is a nonconstant polynomial with degree $\deg w=n\geq 1,$ then $w'$ is a polynomial with degree $n-1.$ In this case, the coefficient functions $a_{0}, a_{1}$ and $b_{1}, \ldots, b_{k}$ are constants, and not all identically equal to zero. Then the right side of \eqref{E1.1} is a polynomial with degree at least $n.$ This is a contradiction.\par

Step 2. Now assume that an admissible solution $w$ is a transcendental entire function such that $\overline{N}(r, \frac{1}{w})=O(N(r, \frac{1}{w})).$  Suppose that $z_{0}$ is a zero of $w$ with multiplicity $s\geq 1.$ Then $z_{0}$ is a zero of $w'$ with multiplicity $s-1,$ and thus is a simple pole of $$k(z):=a_{0}(z)+a_{1}(z)w (z)+\sum_{j=1}^{k}b_{j}(z)w(z-c_{j})$$ according to \eqref{E1.1}. Since $w$ is entire , it is obvious that $z_{0}$ must be a pole of at least one of the coefficients $a_{0}, a_{1}$ and $b_{1}, \ldots, b_{k}.$ If the solution $w$ has infinitely many zeros $z_{0}$ such that $N(r, \frac{1}{w})\neq o(T(r, w)),$  then at least one coefficient function $c\in\{a_{0}, a_{1}, b_{1}, \ldots, b_{k}\}$ must have infinitely many poles such that $$T(r, c)\geq N(r, c)\geq \overline{N}(r, \frac{1}{w})=O(N(r, \frac{1}{w}))\neq o(T(r, w)).$$ This is a contradiction with the condition that all coefficient functions $a_{0}, a_{1}$ and $b_{1}, \ldots, b_{k}$ are small with respect to the admissible solution $w.$ Hence we have the claim that $w$ must have either at most finitely many zeros, or infinitely many zeros such that  $N(r, \frac{1}{w})=o(T(r, w)).$\par

Therefore, by the Weierstrass (Hadamard) factorization theorem of entire functions (see for examples, \cite{goldberg-ostrovskii, yi-yang}), we may assume that \begin{equation}\label{E1.2}w(z)=p(z)e^{g(z)},\end{equation}
where $p$ is either a nonzero polynomial function (thus, $T(r, p)=o(T(r, w))$), or the canonical product of all infinitely many zeros of $w$ such that $$T(r, p)+O(1)=N(r, \frac{1}{p})=N(r, \frac{1}{w})=o(T(r, w)),$$ and $g$ is a nonconstant entire function such that $T(r, e^{g})=T(r, w).$ Thus $e^{g}$ and $w$ have the same growth category.  Since $g$ is not a constant, none of $g(z-c_{j})$ is a constant.\par

We claim that\par
(i). $T(r, \frac{w(z)}{w(z-c_{j})})\neq o(T(r, w))$ for all $j=1, \ldots, k,$ and \par
(ii). $T(r, \frac{w(z-c_{n})}{w(z-c_{m})})\neq o(T(r, w))$ for any two distinct $m, n$ of $\{1, \ldots, k\}.$\\ Otherwise, there exists one $m\in\{1, \ldots, k\}$ satisfies that either $w(z-c_{j})\equiv \alpha(z) w(z)$ or $w(z-c_{m})\equiv\alpha(z) w(z-c_{n}),$ where $T(r,\alpha)=o(T(r, w))$ and $n\in\{1, \ldots, k\}\setminus\{m\}.$ This contradicts the assumption that \eqref{E1.1} has reduced form.\par

From the claim,  we can see that
all $g(z-c_{j})-g(z)$ for $1\leq j\leq k$ are not constant, and that $g(z-c_{m})-g(z-c_{n})$ are not constant for any $\{m, n\}\subset\{1, 2, \ldots, k\}.$ Otherwise, there would exist a  contradiction with the assumption (i) or (ii).\par

Submitting \eqref{E1.2} into \eqref{E1.1}, we obtain
\begin{eqnarray}\label{E1.3}
&&\left[a_{0}(z)-\frac{p'(z)+p(z)g'(z)}{p(z)}\right]e^{0}+a_{1}(z)p(z)e^{g(z)}\\\nonumber&&+\sum_{j=1}^{k}b_{j}(z)p(z-c_{j})e^{g(z-c_{j})}\equiv 0.
\end{eqnarray}  It follows from Lemma \ref{L4} that both $w(z)$ and $w(z-c_{j}),$ and thus both $e^{g(z)}$ and $e^{g(z-c_{j})}$ have the same growth category for all $j\in\{1, 2, \ldots, k\}.$ Denote $g_{k+1}:=0,$ $g_{k+2}:=g(z),$ and $g_{j}:=g(z-c_{j})$ for $j=1, \ldots, k.$ Then  for $1\leq j\leq k,$ $1\leq h<n\leq k+2,$ we have  \begin{eqnarray*}
              % \nonumber to remove numbering (before each equation)
            && \max\left\{T\left(r, a_{0}(z)-\frac{p'(z)+p(z)g'(z)}{p(z)}\right), T(r, a_{1}(z)p(z)),  T(r, b_{j}(z)p(z-c_{j}))\right\}\\&&= o(T(r, e^{g_{h}-g_{n}})).
              \end{eqnarray*}
Now it follows from Lemma \ref{L3} that $$a_{0}(z)-\frac{p'(z)+p(z)g'(z)}{p(z)}\equiv a_{1}(z)p(z)\equiv b_{j}(z)p(z-c_{j})\equiv 0,$$ which implies
$$b_{1}\equiv \cdots \equiv b_{k}\equiv a_{1}\equiv 0.$$ This is a contradiction to the assumption that none of $b_{1}, \ldots, b_{k}$ is identically equal to zero.\par

\end{proof}

\section{Proof of Theorem \ref{T2}}\label{S4}
To prove Theorem \ref{T2}, we need the following two lemmas. The first lemma below is the well-known lemma of logarithmic derivative for meromorphic functions in Nevanlinna theory.\par

\begin{lemma}\cite{hayman-1}\label{L1}
Let $f$ be a nonconstant meromorphic function. Then
\begin{eqnarray*}
m\left(r, \frac{f'(z)}{f(z)}\right)=O\left(\log rT(r, f)\right)
\end{eqnarray*} for all $r\not\in E,$ where $E$ is a set with finite logarithmic measure $E,$ i.e.,
$$\limsup_{r\rightarrow\infty}\int_{E\cap [1,r]}\frac{dt}{t}<\infty.$$
\end{lemma}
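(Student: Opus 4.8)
The plan is to prove this classical lemma of the logarithmic derivative by the Poisson--Jensen route, combined with the key device of passing to a fractional power and applying Jensen's (concavity) inequality, so that the large quantity $T(R,f)$ enters the final bound only through its logarithm. Fix $0 < r < R$ and list, with multiplicity, the zeros $a_\mu$ and poles $b_\nu$ of $f$ in $|z| < R$. The Poisson--Jensen formula represents $\log|f|$ on $|z| = r$ as a boundary Poisson integral of $\log|f(Re^{i\theta})|$ plus the Blaschke-type sums over the $a_\mu$ and $b_\nu$; differentiating this representation with respect to $z$ (using $\log|f| = \operatorname{Re}\log f$) yields the explicit formula
$$\frac{f'(z)}{f(z)} = \frac{1}{2\pi}\int_0^{2\pi}\log|f(Re^{i\theta})|\,\frac{2Re^{i\theta}}{(Re^{i\theta}-z)^2}\,d\theta + \sum_\mu\left(\frac{1}{z-a_\mu}+\frac{\overline{a}_\mu}{R^2-\overline{a}_\mu z}\right) - \sum_\nu\left(\frac{1}{z-b_\nu}+\frac{\overline{b}_\nu}{R^2-\overline{b}_\nu z}\right),$$
valid for $|z|=r$ off the zeros and poles.

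The heart of the argument is to bound an $L^s$ mean of $f'/f$ rather than its $\log^+$ directly. Fix once and for all an exponent $s\in(0,1)$, say $s=\tfrac12$. Since $t\mapsto t^s$ is subadditive on $[0,\infty)$, taking absolute values in the formula above and raising to the power $s$ bounds $|f'/f|^s$ by the sum of the $s$-th powers of the three groups of terms. On $|z|=r$ the boundary kernel satisfies $|2Re^{i\theta}/(Re^{i\theta}-z)^2|\le 2R/(R-r)^2$, and each reflected term $\overline{a}_\mu/(R^2-\overline{a}_\mu z)$ (and likewise for $b_\nu$) is bounded by $1/(R-r)$; those pieces therefore contribute at most a power of $\tfrac{R}{(R-r)^2}\bigl(m(R,f)+m(R,1/f)\bigr)$ and of $\tfrac{N}{R-r}$, where $N:=n(R,f)+n(R,1/f)$. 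For the principal singular terms I would use the uniform integrability estimate
$$\frac{1}{2\pi}\int_0^{2\pi}\frac{d\phi}{|re^{i\phi}-a|^{s}}\le \frac{C(s)}{r^{s}}\qquad(a\in\mathbb{C}),$$
which holds uniformly in $a$ precisely because $s<1$ keeps $|re^{i\phi}-a|^{-s}$ integrable even when $|a|=r$. Summing this over the $N$ zeros and poles and collecting everything, the $s$-mean is bounded by a quantity that is \emph{polynomial} in $T(R,f)$, $R$, $1/r$ and $1/(R-r)$:
$$\frac{1}{2\pi}\int_0^{2\pi}\left|\frac{f'}{f}(re^{i\phi})\right|^{s}d\phi \le C_1(s)\left[\left(\frac{R\,T(R,f)}{(R-r)^2}\right)^{s}+\frac{N}{r^{s}}+\left(\frac{N}{R-r}\right)^{s}\right]+O(1),$$
using $m(R,f)+m(R,1/f)\le 2T(R,f)+O(1)$ and $N=O(T(eR,f))$ (comparison with the counting integral).

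Now the logarithm is produced by Jensen's inequality. Since $\log^+(x^s)=s\log^+x$ and $t\mapsto\log(1+t)$ is concave,
$$m\!\left(r,\frac{f'}{f}\right)=\frac1s\cdot\frac{1}{2\pi}\int_0^{2\pi}\log^+\left|\frac{f'}{f}\right|^{s}d\phi \le \frac1s\log^+\!\left(\frac{1}{2\pi}\int_0^{2\pi}\left|\frac{f'}{f}\right|^{s}d\phi\right)+\frac{\log 2}{s}.$$
Inserting the polynomial bound from the previous step and taking the logarithm turns every factor of $T$, $R$, $1/r$, $1/(R-r)$ into an additive logarithmic term, giving the two-radius estimate
$$m\!\left(r,\frac{f'}{f}\right)\le A\left[\log^+ T(R,f)+\log^+\frac{R}{R-r}+\log^+ R+\log^+\frac1r\right]+O(1)$$
for an absolute constant $A$ and all $0<r<R$.

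It remains to eliminate the auxiliary radius $R$, which is where the exceptional set of finite logarithmic measure appears. I would apply a Borel-type growth lemma with the \emph{multiplicative} increment $R=r\bigl(1+1/T(r,f)\bigr)$: the lemma guarantees $T(R,f)\le 2T(r,f)$ for all $r$ outside a set $E\subset(1,\infty)$ with $\int_E dt/t<\infty$. With this choice $R-r=r/T(r,f)$, so $\tfrac{R}{R-r}=T(r,f)+1$ and hence $\log^+\tfrac{R}{R-r}=O(\log T(r,f))$, while $\log^+ R=\log^+ r+O(1)$ and $\log^+ T(R,f)=O(\log T(r,f))$. Substituting into the two-radius estimate yields $m(r,f'/f)=O\bigl(\log T(r,f)+\log r\bigr)=O\bigl(\log(rT(r,f))\bigr)$ for all $r\notin E$, exactly the assertion. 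The two steps demanding the most care are the uniform integrability bound for $|re^{i\phi}-a|^{-s}$ (the whole reason for working with a fractional power, and what tames the otherwise-divergent contribution of zeros and poles lying near $|z|=r$) and the correct choice of increment in the growth lemma, since a multiplicative increment $r/T(r)$ is what yields a \emph{logarithmic}-measure exceptional set matching the statement, whereas an additive increment $1/T(r)$ would give only finite linear measure.
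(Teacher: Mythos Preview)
The paper does not actually prove this lemma; it is stated with a citation to Hayman's monograph and used as a black box. So there is no ``paper's own proof'' to compare against.

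That said, your argument is the classical one (essentially Nevanlinna's proof as reproduced in Hayman): differentiate the Poisson--Jensen representation, pass to a fractional power $s\in(0,1)$ so that the singular kernels $|re^{i\phi}-a|^{-s}$ remain uniformly integrable, apply concavity of the logarithm to convert the $L^s$ bound into a $\log^+$ bound, and finally remove the auxiliary radius $R$ with a Borel-type growth lemma. The overall strategy and the key technical observations (subadditivity of $t\mapsto t^s$, the uniform bound $\frac{1}{2\pi}\int_0^{2\pi}|re^{i\phi}-a|^{-s}\,d\phi\le C(s)r^{-s}$, and the multiplicative increment $R=r(1+1/T(r,f))$ yielding an exceptional set of finite \emph{logarithmic} measure) are all correct and well identified.

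One minor point worth tightening in a full write-up: when you invoke $N=n(R,f)+n(R,1/f)=O(T(eR,f))$, the resulting two-radius inequality then carries $T(eR,f)$ rather than $T(R,f)$. This is harmless---either introduce an intermediate radius $r<\rho<R$, applying Poisson--Jensen on $|z|<\rho$ and bounding $n(\rho)$ by $(\log(R/\rho))^{-1}\bigl(N(R,f)+N(R,1/f)\bigr)$, or simply absorb the constant factor $e$ into the multiplicative Borel step---but it should be made explicit.
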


The second lemma is given by Goldberg and Ostrovskii \cite{goldberg-ostrovskii}.\par

\begin{lemma}\cite[Remark and proof of Theorem 1.6 in Charpter 2]{goldberg-ostrovskii} \label{L5} Let $f(z)$ be a meromorphic function, and let $a\in\mathbb{C}\cup\{\infty\}.$ Then
\begin{eqnarray*}T(r, f(z+b))&\geq&(1+o(1))T(r-|b|, f(z))\\&\geq& (1+o(1))T(r-2|b|, f(z+b))\end{eqnarray*}and
\begin{eqnarray*}N(r, f(z+b)=a)&\geq& (1+o(1))N(r-|b|, f(z)=a)\\&\geq& (1+o(1))N(r-2|b|, f(z+b)=a).\end{eqnarray*}
\end{lemma}

\begin{proof}[Proof of Theorem \ref{T2}]

Suppose $w$ is an admissible meromorphic solution of \eqref{E1.4}. Then obviously, $w$ is not a constant. Equation \eqref{E1.1} can be rewritten as
\begin{eqnarray}\label{E2.0}
w(z-c)=\frac{1}{b(z)}\left[\frac{w'(z)}{w(z)}-a(z)\right].
\end{eqnarray}
By the logarithmic derivative lemma (Lemma \ref{L1}) and \eqref{E2.0}, we get that
\begin{eqnarray*}m(r, w(z-c))&\leq& m(r, \frac{1}{b(z)})+m(r, \frac{w'(z)}{w(z)})+m(m(r, a(z)))+O(1)\\
&=&o(T(r, w(z))).\end{eqnarray*}
for all $r\in (1, +\infty)$ possibly outside a set $E$  with finite logarithmic measure. Hence it follows from the first main theorem and Lemma \ref{L5} that
\begin{eqnarray*}
(1+o(1))N(r+|c|, w(z))
&\geq&
N(r, w(z-c))\\&=&T(r, w(z-c))-m(r, w(z-c))\\
&=&T(r,w(z-c))+o(T(r, w))\\
&\geq&(1+o(1))T(r-|c|,w(z))+o(T(r, w))
\end{eqnarray*} for $r\not\in E.$ This implies that $N(r, w)$ and $T(r, w)$ have the same growth category.
\end{proof}

%% If you have bibdatabase file and want bibtex to generate the
%% bibitems, please use
%%
%%  \bibliographystyle{elsarticle-num-names}
%%  \bibliography{<your bibdatabase>}

%% else use the following coding to input the bibitems directly in the
%% TeX file.

\end{document}